\newtheorem*{remark}{Remark}
\numberwithin{equation}{section}
\numberwithin{table}{section}
\numberwithin{figure}{section}
\newcommand*\diff{\mathop{}\!\mathrm{d}}
\numberwithin{theorem}{section}
\newcommand{\TheTitle}{Convergence of Milstein Brownian bridge Monte Carlo methods and stable Greeks calculation} 
\newcommand{\TheAuthors}{Thomas Gerstner, Bastian Harrach, Daniel Roth}
\title{{\TheTitle}\thanks{Submitted to the editors \today .
%\funding{}
}}
\author{
  Thomas Gerstner\thanks{Department of Mathematics, Goethe University Frankfurt, Germany (\email{gerstner@math.uni-frankfurt.de},
    \email{harrach@math.uni-frankfurt.de}, \email{roth@math.uni-frankfurt.de}).}
  \and
  Bastian Harrach\footnotemark[2]
  \and
  Daniel Roth\footnotemark[2]
}
\tikzstyle{decision} = [diamond, draw, fill=blue!20, 
\tikzstyle{block} = [rectangle, draw,  
\tikzstyle{blocks} = [rectangle, draw,  
    \tikzstyle{blockss} = [rectangle, draw,  
\tikzstyle{line} = [draw, -latex']
\tikzstyle{cloud} = [draw, ellipse,fill=red!20, node distance=3cm,
\begin{document}

\maketitle

% REQUIRED

\begin{abstract}
We consider in this work the pricing and the sensitivity calculation of continuously monitored barrier options. Although standard Monte Carlo algorithms work well for pricing these options, they often do not behave in a stable way with respect to numerical differentiation. To bypass this problem, one would generally either resort to regularized differentiation schemes or derive an algorithm for precise differentiation. While the widespread solution of using a Brownian bridge approach leads to accurate first derivatives, they are not Lipschitz-continuous. This leads to instability with respect to numerical differentiation for second-order Greeks.

To alleviate this problem - i.e. produce Lipschitz-continuous first-order derivatives - and reduce variance, we generalize the idea of one-step survival to general scalar stochastic differential equations. This approach leads to the new one-step survival Brownian bridge approximation, which allows for stable second-order Greeks calculations.

This work proves unbiasedness and variance reduction of our new, one-step survival version, with respect to the classical, Brownian bridge approach. Furthermore, we will present a new convergence result for the Brownian bridge approach using the Milstein scheme under certain conditions. Overall, these properties imply convergence of the new one-step survival Brownian bridge approach. 

To show the new approach's numerical efficiency, we present a respective Monte Carlo pathwise sensitivity estimator for the first-order Greeks and study different approaches to compute second-order Greeks stably. Finally, we develop a one-step survival Brownian bridge multilevel Monte Carlo algorithm to reduce the computational cost in practice.

\end{abstract}

% REQUIRED
\begin{keywords}
  Monte Carlo, barrier options, pathwise sensitivities, Brownian bridge, one-step survival, second-order Greeks
\end{keywords}

%% REQUIRED
%\begin{AMS}
 % 68Q25, 68R10, 68U05
%\end{AMS}
\section{Introduction}
In computational finance, Monte Carlo methods are used extensively in the pricing of financial derivatives and quantitative risk management \cite{glasserman,asmussen2007stochastic}. We consider Monte Carlo pricing schemes for the prices and sensitivities of different types of exotic options with discontinuous payoffs, especially the continuously monitored barrier options. Whether a specific pre-defined barrier condition is fulfilled or not, a continuously monitored barrier option is "knocked-in" or "knocked-out" when the underlying asset crosses this barrier. For an overview of other exotic options, see Zhang \cite{Zhang}, particularly for options with a discontinuous payoff. For an overview of various approaches that aim to price specific types of exotic options through Monte Carlo simulation, we refer to the monograph by Glasserman \cite{glasserman}.

In practice, both discretely \cite{broadie1997continuity}, and continuously monitored barrier options \cite{yang2017pricing} are among the most frequently traded derivatives. Therefore, it is essential to price barrier options under flexible models, irrespective of the monitoring frequency, to describe the observed market option prices. Contributing to that effort, we look at pricing and Greeks calculation of continuously monitored barrier options. For an overview of discretely monitored barrier options, we refer to \cite{gerstner2018monte}. Even though some analytical pricing formulas exist for some basic models, see, e.g.,  \cite{merton1973theory,briys1998options}, it is well known that the classical Black-Scholes model lacks the necessary flexibility to fit the observed market data, see, e.g., \cite{gatheral2011volatility}. For the study of more complex stochastic models, Monte Carlo simulation remains the preferred approach for pricing. 

Besides pricing, financial institutions need to evaluate the sensitivities of their portfolios due to regulations. 
Using finite-difference approximations to compute option sensitivities is simple but can also be hard to control since it generally leads to unstable results. Even the smallest numerical errors may have arbitrarily large effects on the finite difference approximation. This property, known as \textit{ill-posedness}, cf., Engl, Hanke and Neubauer \cite{engl1996regularization}, requires further studies.
The challenges for barrier options are usually handled in the following ways: payoff-smoothing combined with finite differences or the Likelihood Ratio Method. For discretely observed barrier options, Alm et al. \cite{alm} determined that the first approach is computationally more efficient. Furthermore, the authors show that a Monte Carlo pricing algorithm that uses a Lipschitz-continuous payoff allows stable differentiation by simple finite differences. We will see that the Brownian bridge approach's lack of Lipschitz-continuity leads to second-order Greeks' instability with respect to numerical differentiation. To overcome this problem, we combine the one-step survival strategy \cite{staum} with the Brownian bridge method to obtain the new one-step survival Brownian bridge approach that allows a stable second-order Greek computation through finite differences. 

The central part of this work will be to show the one-step survival Brownian bridge approaches' unbiasedness and variance reduction. Specifically, in this work, we consider unbiasedness to be understood with respect to the Brownian bridge approach or its derivatives. The unbiasedness and certain convergence properties of the Brownian bridge approach, which we will discuss later, will lead to the new approach's overall convergence. Moreover, we will present the first partial derivatives, show their unbiasedness and present the respective pathwise sensitivity Monte Carlo estimator. In \cite{burgosdis}, Burgos proves that, under certain assumptions, the first derivatives of the Brownian bridge approach satisfy weak convergence. Again, together with the unbiasedness, this convergence property will guarantee convergence of the new approach's first derivatives. Then, we discuss the one-step survival Brownian bridge approach's main practical advantage: Stable second-order Greeks calculation. Therefore, we discuss second-order pathwise sensitivities and extend the result of Alm et al. \cite{alm} to stable second-order Greeks through finite differences.

Neither the pathwise sensitivities nor the Likelihood Ratio Method can be used universally. Other estimators for price approximations and their Greeks using the Malliavin calculus tools have been developed in \cite{gobet2003computation,fournie1999applications,benhamou2000application}. However, in general, Malliavin calculus, which is beyond this work's scope, leads to random variables whose simulations are costly in computational time. In \cite{gobet2004revisiting}, the authors, rather than using Malliavin calculus techniques, successfully take advantage of the models' Markovian structure. Their essential idea consists of using suitable martingals to derive integration by parts formula, see, e.g., \cite{thalmaier1998gradient, bismut1984large}, however, this is restricted to the Delta for barrier options and bounded coefficient terms. For further developments on the combinations of pathwise sensitivities and the Likelihood Ratio Method, see, e.g., \cite{giles2009vibrato}. For further studies on the law's properties associated with the first hitting time using bounded coefficient assumptions and a parametric method, \cite{andersson2017unbiased,bally2015probabilistic}, we refer to Frikha Kohatsu-Higa and Li \cite{frikha2016first}.

In this work, we are interested in the expected value $\mathbb{E}[P]$ of a quantity that is a functional $P$ of the solution of a stochastic differential equation (SDE) with a general drift and volatility term. It is well known that under certain conditions, see, e.g., \cite{kloeden2012numerical}, one obtains convergence for the expectation of a Lipschitz-continuous payoff if it only depends on the solutions' time of maturity. In particular, for pricing an option, we are interested in the (weak) convergence
\begin{align}
 \mathbb{E}[P-\widehat{P}]   \leq C h^\alpha,\label{alpha}
\end{align}
with a constant $C>0$ and $\alpha>0$ for the approximated expected payoff $\widehat{P}$ evaluated on a discretization of the SDE using a certain step width $h$. Using the appropriate conditions on the drift and volatility terms, we know that the Euler-Maruyama or Milstein schemes typically converge for specific payoffs. E.g., they converge for $\mathcal{C}_P^{4}$ with $\alpha=1$, for time-independent payoffs satisfying $P \in \mathcal{C}_P^{4}$ (the space of functions such that all partial derivatives up to and including $4$ exist, are continuous and have polynomial growth), see, e.g., \cite{kloeden2012numerical}.\\
We know from Asmussen, Glynn, and Pitman \cite{asmussen1995discretization} that the convergence order is bounded by $\alpha=1/2$ for any path-dependent payoff using the maximum (minimum) of a discrete approximation as an approximation of the maximum (minimum), as one does by default for the barrier options. To recover a convergence of order one, the most frequently used approach is the Brownian bridge interpolation, which samples if the maximum exceeds the barrier between two steps. 
 
From Gobet \cite{gobet2000weak}, we know that this approach recovers $\alpha=1$ using the Euler-Maruyama scheme while assuming stronger conditions on the drift and volatility, i.e., $C_b^\infty$, and on the support or regularity conditions on the payoff, i.e., the support strictly excluded from, or vanishing at the barrier. Gobet explains how the drift and volatility conditions can be weakened depending on the payoff structure, i.e., to $C_b^5$ on the volatility for a vanishing payoff or to $C_b^7$ for a growth bounded payoff. 

In the recent work on the Multilevel Monte Carlo analysis, Giles, Debrabant, and Roessler \cite{giles2013numerical} show there is weak order $\alpha=1$ for Lipschitz-continuous and Asian options using the Milstein scheme for weaker assumptions. Furthermore, for barrier options using the Brownian bridge approach with the Milstein scheme, they prove the multilevel estimator's convergence. They point out that their work could be modified to show that the Brownian bridge approach using the Milstein scheme satisfies $\alpha=1-\delta$, for any $\delta>0$, under these weaker assumptions. For the sake of completeness, we will prove this property in the appendix. We will formulate the theorem on unbiasedness and variance reduction quite generally, so that it can be applied independently of specific model assumptions or respective weak convergence assumptions. We will use the Milstein scheme in the main theorem due to its beneficial properties for further computational savings. Nevertheless, the application to the less complex Euler-Maruyama scheme is straightforward, as we will present in a lemma. 

Monte Carlo methods can be computationally expensive, as in stochastic differential equations, particularly since the cost of generating the individual stochastic samples is very high. It is well established that the computational complexity (cost) to achieve an error $\epsilon$ is of order $\mathcal{O}(\epsilon^{-3})$, see, e.g., \cite{duffie1995efficient}, provided that the stochastic differential equations satisfy certain conditions \cite{kloeden2012numerical,bally1996law,talay1990expansion}. Giles \cite{giles2008multilevel,giles2008improved} shows that multigrid ideas can be used to reduce the computational complexity to $\mathcal{O}(\epsilon^{-2})$ using the multilevel Monte Carlo and the Milstein scheme. The Multilevel Monte Carlo method got various generalizations and extensions, see, e.g.,  \cite{giles2015multilevel}, for an overview. Giles, Debrabant, and Roessler \cite{giles2013numerical} showed that the Brownian bridge approach for barrier options satisfies the necessary convergence properties leading to an efficient multilevel Monte Carlo method. Unfortunately, this approach cannot be straightforwardly applied to the one-step survival Brownian bridge estimator since the coarse path modification used would lead to biased survival probabilities. Nevertheless, we present a modification to the one-step survival Brownian bridge approach and show its numerical efficiency. For further studies on the Brownian bridge multilevel Monte Carlo approach, we refer to \cite{burgos,burgosdis}. 

The structure of this work is as follows. In section 2, we present the main result of the one-step survival Brownian bridge approximation. Then, we study first and second-order Greeks, including the pathwise sensitivity approximation for the one-step survival Brownian bridge approach and the stability result for second-order finite differences. In section 3, we present the multilevel Monte Carlo algorithm for the new approach. Numerical results for the variance reduction, Greeks' stability, and the multilevel algorithm's efficiency are provided in section 4. Section 5 contains some concluding remarks. In the appendix, we present the convergence result.

\section{One-step survival Brownian bridge Monte Carlo estimator for continuously monitored barrier options}

We will focus on barrier options, which only depend on one underlying asset. We are interested in the expected value of a payoff $P$ that is a functional of the asset price. In particular, we suppose that we have the following model for the asset price.
\begin{definition}\label{SDE}
The underlying asset price $S(t)$ is a continuous-time stochastic process whose evolution SDE is of the generic form
\begin{align}
dS(t)=\mu (S,t)dt+\sigma(S,t)dW(t),\label{sde}
\end{align}
on the time interval $t\in [t_0,T]$, with initial value $S_0$, drift $\mu$, volatility $\sigma$ and the Brownian motion $W$.
\end{definition}

Now, we will introduce continuously monitored barrier options.

\begin{definition}
The payoff of a continuously observed up-and-out barrier call option is given by
\begin{align}
P(S):=
   \begin{cases}
     \left(S(T)-K\right)^+=:q(S(T)) & \max\limits_{t\in[t_0,T]}S(t) \le B \label{payoffcont} \\
     0 & \text{otherwise, }  
   \end{cases}
\end{align}
with barrier value $B$, strike price $K$, time of maturity $T$ and current time $t_0$.
\end{definition}

As stated above, we are interested in such an instrument's expected value, which is defined as follows, where for the sake of simplicity, we set $r=0$.

\begin{definition}\label{payoffcontiniuousupout2}
The present value of an option with payoff \eqref{payoffcont} is given by the  discounted expected payoff
\begin{align*}
PV_{t_0}=\mathbb{E}[P(S)],
\end{align*}
at the current time $t_0$ and at the time of maturity $T$.
\end{definition}

Classic weak convergence theory, see, e.g., \cite{kloeden2012numerical}, shows, that the Milstein scheme
\begin{align}
\begin{split}\widehat{S}_{n+1}=\widehat{S}_n&+\mu(\widehat{S}_n,t_n)h+ \sigma(\widehat{S}_n,t_n) \sqrt{h}\Delta Z_n \\&+ \frac{1}{2}\sigma(\widehat{S}_n,t_n) \sigma'(\widehat{S}_n,t_n)\left( (\sqrt{h} \Delta Z_n)^2-h\right),\end{split} \label{milstein}
\end{align}
where $\sigma'$ denotes the derivative of $\sigma(S,t)$ with respect to $S$, with $n=0,\dots,N-1$ discretization steps, $\Delta Z_n \sim N(0,1)$, $\widehat{S}_0=S_0$, $h=T/N$, $t_n$ the time $t$ at step $n$, converges in the sense of \eqref{alpha}, with $\alpha=1$ for the Lipschitz-continuous payoff functions (only depending on the time of maturity), see, e.g., \cite{glasserman}. The same result holds for the Euler-Maruyama scheme, which is defined similarly but without the last summand, i.e., if the derivative of the volatility vanishes both schemes are equal.
From Asmussen, Glynn, and Pitman \cite{asmussen1995discretization}, we know that for any path-dependent payoff using the maximum (minimum) of a discrete approximation as an approximation of the maximum (minimum), as one would do by default for barrier options, the convergence order is bounded by $\alpha=1/2$.
 
The difficulty of recovering the convergence order for the barrier options can be circumvented by sampling if the maximum exceeds the barrier between two discretization steps, instead of sampling the maximum itself, see, e.g., Glasserman \cite{glasserman} for a derivation. We use
\begin{align}
\widehat{p}_n=\exp \left( \frac{-2(B-\widehat{S}_n)^+(B-\widehat{S}_{n+1})^+}{\sigma(\widehat{S}_{n},t_n)^2 h}  \right)\label{brownianp},
\end{align}
with $n=0,\dots,N-1$, which is the conditional probability of crossing the barrier between two steps. Then, the Brownian bridge approximation of the payoff \eqref{payoffcont} is defined by
\begin{align}
%\begin{split}
%\widehat{P}= \int\limits_{-\infty}^{\infty}\cdots \int\limits_{-\infty}^{\infty}\phi(z_{0})\cdots \phi(z_{N-1}) \prod\limits_{n=0}^{N-1} \left( 1- \widehat{p}_n\right)\\
% q(\widehat{S}_{N}(z_{N-1},\dots,z_{0}))\diff z_{N-1}\cdots \diff z_{0},\end{split}\label{BBint}
\begin{split}
\widehat{P}\left(\widehat{S}_N,\widehat{p}_0,\dots,\widehat{p}_{N-1}\right)= q(\widehat{S}_{N}) \prod\limits_{n=0}^{N-1} \left( 1- \widehat{p}_n\right) ,\end{split}\label{BBint}
\end{align} 
with $q$ from \eqref{payoffcont}. 
By sampling a sequence of possible realizations $(\widehat {s}_{1,N},\dots ,\widehat{s}_{M,N})$, $m=1,\dots ,M$, of the random variables $(\widehat{S}_1,\dots,\widehat{S}_N)$, we obtain the one-step survival Brownian bridge Monte Carlo estimator for $PV_{t_0}$, see e.g. \cite{glasserman}. 
\begin{corollary}\label{mc1}
The Brownian bridge Monte Carlo estimator for the present value of a up-and-out barrier option is given by the average
\begin{align*}
\overline{P_{M}}:=\frac{1}{M}\sum\limits_{m=1}^{M}\left[ q(\widehat {s}_{N,m})\prod\limits_{n=0}^{N-1} \left( 1- \widehat{p}_{n,m}\right)\right].
\end{align*}
\end{corollary}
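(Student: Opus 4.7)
The plan is to treat the stated formula as a straightforward Monte Carlo sample mean applied to the Brownian bridge approximation $\widehat{P}$ from \eqref{BBint}, and to justify that it is the natural estimator of $PV_{t_0}=\mathbb{E}[P(S)]$. The corollary itself is essentially definitional, so the work to do is to assemble the conditional-probability interpretation of $\widehat{p}_n$, verify independence across subintervals and samples, and invoke the law of large numbers.

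First, I would make precise the probabilistic meaning of the factors $\widehat{p}_n$ from \eqref{brownianp}: by the classical formula for the maximum of a Brownian bridge with local diffusion coefficient $\sigma(\widehat{S}_n,t_n)$, the quantity $\widehat{p}_n$ is the conditional probability that a Brownian bridge on $[t_n,t_{n+1}]$ joining $\widehat{S}_n$ to $\widehat{S}_{n+1}$ attains the barrier $B$; hence $1-\widehat{p}_n$ is the conditional survival probability on the $n$-th subinterval. Because the bridges on disjoint subintervals are conditionally independent given the Milstein skeleton $(\widehat{S}_0,\dots,\widehat{S}_N)$, the product $\prod_{n=0}^{N-1}(1-\widehat{p}_n)$ is the total conditional survival probability of the interpolated path. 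Multiplying by the terminal payoff $q(\widehat{S}_N)$ and taking outer expectation over the skeleton therefore yields $\mathbb{E}[\widehat{P}]$, the Brownian-bridge approximation of $PV_{t_0}$; the closeness $\mathbb{E}[\widehat{P}]\approx PV_{t_0}$ is the weak-convergence statement mentioned in the introduction and proved in the appendix.

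Second, by drawing $M$ independent realizations $(\widehat{s}_{N,m},\widehat{p}_{0,m},\dots,\widehat{p}_{N-1,m})$ — each obtained by running \eqref{milstein} with its own independent sequence of Brownian increments $\{\Delta Z_n\}_{n=0}^{N-1}$ — the sample mean $\overline{P_M}$ is an unbiased estimator of $\mathbb{E}[\widehat{P}]$. Since $1-\widehat{p}_n\in[0,1]$ is bounded, one has $|\widehat{P}|\le q(\widehat{S}_N)$; combined with the linear growth of $q$ from \eqref{payoffcont} and the finite moments of the Milstein scheme under the standing hypotheses on $\mu,\sigma$, this yields $\mathrm{Var}(\widehat{P})<\infty$. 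The strong law of large numbers then provides $\overline{P_M}\to\mathbb{E}[\widehat{P}]$ almost surely, and the central limit theorem controls the Monte Carlo error.

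The argument is thus essentially bookkeeping rather than deep mathematics — the probabilistic content is already encoded in the survival formula \eqref{brownianp}. The only point that deserves explicit care is the independence structure across the $M$ samples and the measurability of each factor $\widehat{p}_{n,m}$ with respect to its own sample $\sigma$-algebra, which ensures that the individual summands are genuinely i.i.d.\ and allows the classical Monte Carlo convergence theory to apply verbatim.
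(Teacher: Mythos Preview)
Your proposal is correct, and in fact it supplies considerably more detail than the paper itself: the corollary is presented there without proof, as a direct definitional consequence of sampling i.i.d.\ realizations of \eqref{BBint}, with only a reference to Glasserman \cite{glasserman} for the standard Monte Carlo construction. Your additional arguments (conditional independence of the bridges, finiteness of $\mathrm{Var}(\widehat{P})$, and the LLN/CLT) are all sound and merely make explicit what the paper leaves implicit.
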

\Cref{test1} presents a procedure for an estimator of \cref{mc1} while using the Milstein scheme. The algorithm can be easily modified to use the Euler-Maruyama scheme by omitting the last summand of the Milstein scheme of line 7. Fixing the discretization scheme still leaves open some flexibility in choosing the process to apply it, e.g., for a geometric Brownian motion, the transformation to $X(t)=\log S(t)$ would be exact and lead to a constant volatility coefficient.  
\begin{algorithm}\label{algorithm1}
\caption{The Brownian bridge Monte Carlo estimator for an up-and-out barrier option}\label{test1}
\begin{algorithmic}[1]
\STATE Initialize random seed
\FOR {$m=1,\dots,M$} 
\FOR {$n=0:N-1$}
\STATE Sample $z \sim N(0,1)$
\STATE $\widehat{S}_{n+1}=\widehat{S}_n+\mu(\widehat{S}_n,t_n)h+ \sigma(\widehat{S}_n,t_n) \sqrt{h}z$
\STATE \hspace{30mm}$+ \frac{1}{2}\sigma(\widehat{S}_n,t_n) \sigma'(\widehat{S}_n,t_n)\left( (\sqrt{h} z)^2-h\right)$
\STATE $\widehat{p}_n=\exp \left( \frac{-2(B-\widehat{S}_n)^+(B-\widehat{S}_{n+1})^+}{\sigma(\widehat{S}_{n},t_n)^2 h}  \right)$
\ENDFOR
\STATE $\widehat{P}_m= q(\widehat{S}_N)  \prod_{n=0}^{N-1}(1-\widehat{p}_n) $
\ENDFOR
\STATE \textbf{return} $PV_{t_0}:=\frac{1}{M}\sum_{m=1}^M \widehat{P}_m$
\end{algorithmic}
\end{algorithm}

\subsection{One-step survival Brownian bridge approximation}
We now combine the Brownian bridge approximation with the one-step survival idea of Glasserman and Staum \cite{staum} to define the one-step survival Brownian bridge approximation and show that it leads to an unbiased expectation and a variance reduction. Furthermore, we will see that the new approximation will be Lipschitz-continuous since we smooth out the indicator functions of the crossing probabilities \eqref{brownianp}. Now, we will shortly explain the new approach for a simplified case. Then, we present an algorithm and the theorem on unbiasedness and variance reduction.

Consider using only $h=T$ for the Milstein scheme, i.e., $N=1$. For this simplified case, the expectation of the Brownian bridge \eqref{BBint} for given $S_0$ is given by
\begin{align*}
\mathbb{E}[\widehat{P}]= \int\limits_\mathbb{R}\phi(z) \left( 1- \widehat{p}_0\right)q\left(\widehat{S}_1(z)\right)\diff z,
\end{align*}
with the standard normal density $\phi$, the crossing probability
\begin{align*}
\widehat{p}_0&=\exp \left( \frac{-2(B-S_0)^+(B-\widehat{S}_{1})^+}{\sigma(S_0,t_0)^2 h}  \right),
\end{align*}
and the discretization step
\begin{align*}
\widehat{S}_{1}(z)&=S_0+\mu(S_0,t_0)h+ \sigma(S_0,t_0)\sqrt{h} z + \frac{1}{2}\sigma(S_0,t_0) \sigma'(S_0,t_0){h}\left( z^2-1\right).
\end{align*}
The issue of non-Lipschitz-continuous first-derivatives arises since $\widehat{S}_{1}$ could lay above the barrier $B$ and we therefore need the indicator functions in $\widehat{p}_0$. To overcome this problem we use the following considerations: Since we know that $1-\widehat{p}_0$ becomes zero if $\widehat{S}_1$ lays above the barrier we obtain
\begin{align*}
\mathbb{E}[\widehat{P}]=\int\limits_\mathbb{R}\phi(z) \left( 1- \widehat{p}_0\right)q\left(\widehat{S}_1(z)\right)\diff z= \int\limits_{\widehat{S}_1(z)<B}\phi(z) \left( 1- \widehat{p}_0\right)q\left(\widehat{S}_1(z)\right)\diff z.
\end{align*}
Here, $\phi(z)$ is no longer a probability density and so use the normalization
\begin{align*}
\widetilde{p}_0:&=\int_{\widehat{S}_1(z)<B} \phi(z) \diff z
\end{align*}
to obtain
\begin{align}
\mathbb{E}[\widehat{P}]=\widetilde{p}_0\int\limits_{\widehat{S}_1(z)<B}\frac{\phi(z)}{\widetilde{p}_0} \left( 1- \widehat{p}_0\right)q\left(\widehat{S}_1(z)\right)\diff z.\label{pvinsert}
\end{align}
By using this normalized probability density the indicator functions in $\widehat{p}_0$ would not be required anymore, leading to a Lipschitz-continuous first derivative. However, for a Monte Carlo estimator the current representation would need to sample from the normalized density in each discretization step. Therefore, we will transform the integral to the unit cube. This will lead to only uniformly sampled random variables being required for a Monte Carlo estimator. Consider $\sigma'({S}_0,t_0)\neq 0$, we have that $\widehat{S}_{1}(z)<B$ if and only if
\begin{align*}
0>-\frac{B-\widehat{S}_1-\mu({S}_0,t_0)h +\frac{1}{2}\sigma({S}_0,t_0) \sigma'({S}_0,t_0)h}{\sigma({S}_0,t_0) \sqrt{h}}+ z + \frac{1}{2} \sigma'({S}_0,t_0) \sqrt{h}z^2.
\end{align*}
Therefore, we define
\begin{align*}
z<\frac{-1+ \sqrt{1\pm 4\left(\frac{1}{2} \sigma'({S}_0,t_0) \sqrt{h}\right)\left(\frac{B-{S}_0-\mu({S}_0,t_0)h +\frac{1}{2}\sigma({S}_0,t_0) \sigma'({S}_0,t_0)h}{\sigma({S}_0,t_0) \sqrt{h}}\right)}}{  \sigma'({S}_0,t_0) \sqrt{h}}\notag\\:=\Phi^{-1}(\widetilde{p}_0^{\pm})
\end{align*}
with the cumulative standard normal distribution function $\Phi$ and $\widetilde{p}_0:=\widetilde{p}_0^{+}-\widetilde{p}_0^{-}$. Now, we have\begin{align*}
\mathbb{E}[\widehat{P}]=\widetilde{p}_0\int\limits_{\Phi^{-1}(\widetilde{p}_0^{\text{-}})}^{\Phi^{-1}(\widetilde{p}_0^{\text{+}})}\frac{\phi(z)}{\widetilde{p}_0} \left( 1- \widehat{p}_0\right) q\left(\widehat{S}_1(z)\right)\diff z.
\end{align*}
By substituting with $z=\Phi^{-1}(\widetilde{p}_0^{\text{-}}+(\widetilde{p}_0^{\text{+}}-\widetilde{p}_0^{\text{-}}) u)$, we obtain
\begin{align}
\mathbb{E}[\widehat{P}]=  \widetilde{p}_0\int\limits_{0}^{1}\left( 1- \widehat{p}^*_0\right)q\left(\widetilde{S}_1(u)\right)\diff u := \mathbb{E}[\widetilde{P}],
\end{align}
with the modified discretization step
\begin{align*}
\widetilde{S}_{1}(u)=S_0+&\mu(S_0,t_0)h+ \sigma(S_0,t_0)\Phi^{-1}(\widetilde{p}_0^{\text{-}}+u \widetilde{p}_0)\\&+\frac{1}{2}\sigma(S_0,t_0) \sigma'(S_0,t_0)h\left( (\Phi^{-1}(\widetilde{p}_0^{\text{-}}+u \widetilde{p}_0 ))^2-1\right)
\end{align*}
and the modified probability
\begin{align}
\widehat{p}^*_0:=\exp \left( \frac{-2(S_0-B)(\widetilde{S}_{1}-B)}{\sigma(S_0,t_0)^2 h}  \right).\label{p0n1}
\end{align}
For the easier case $\sigma'({S}_0,t_0)=0$, the Milstein scheme simplifies to the Euler scheme and we can transform the integral by using the substitution $z=\Phi^{-1}(\widetilde{p}_0 u)$, with 
\begin{align*}
\widetilde{p}_0&=\Phi \left( \frac{B-{S}_0-\mu({S}_0,t_0)h}{\sigma({S}_0,t_0)\sqrt{h}} \right).
\end{align*}
We know that $\widehat{S}_{1}(z)<B$ if, and only if,
\begin{align*}
0>-\frac{B-{S}_0-\mu({S}_0,t_0)h}{\sigma({S}_0,t_0) \sqrt{h}}+ z.
\end{align*}
Considering general $N$, we can iteratively split the integral expression; see \cref{thmBBOSS} for the profound derivation. 
\begin{corollary}\label{mcoss}
The new one-step survival Brownian bridge Monte Carlo estimator for the expected value is given by
\begin{align}
\overline{P_{M}}:=\frac{1}{M}\sum\limits_{m=1}^{M}\left[ q(s_{N,m}) \prod\limits_{n=0}^{N-1} \left( 1- \widehat{p}^*_{n,m}\right)\prod\limits_{n=0}^{N-1}  \widetilde{p}_{n,m}\right].\label{mc2}
\end{align}
\end{corollary}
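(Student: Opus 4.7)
The plan is to read \cref{mcoss} as the immediate Monte Carlo counterpart of the integral representation derived (for $N=1$) in the text and extended to arbitrary $N$ in \cref{thmBBOSS}. First I would invoke that theorem, which iterates the three-step procedure carried out above at every discretization point: (i) use that $1-\widehat{p}_n$ vanishes on $\{\widehat{S}_{n+1}\ge B\}$ to restrict the $n$-th Gaussian integration to $\{\widehat{S}_{n+1}<B\}$, (ii) normalize the truncated normal density by $\widetilde{p}_n$, and (iii) substitute $z = \Phi^{-1}(\widetilde{p}_n^{-}+u\widetilde{p}_n)$ to push the restricted integral onto $[0,1]$, producing the transformed step $\widetilde{S}_{n+1}$ and the modified probability $\widehat{p}^*_n$. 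Chaining these steps via conditioning successively on $\widehat{S}_0,\dots,\widehat{S}_{N-1}$ and applying the tower property yields
\begin{equation*}
\mathbb{E}[\widetilde{P}]=\mathbb{E}\!\left[q(\widetilde{S}_N)\prod_{n=0}^{N-1}(1-\widehat{p}^*_n)\prod_{n=0}^{N-1}\widetilde{p}_n\right],
\end{equation*}
where the randomness on the right now comes from $N$ independent uniform variables $U_0,\dots,U_{N-1}$ on $[0,1]$ driving the transformed recursion, and the left-hand side equals $\mathbb{E}[\widehat{P}]$ by the unbiasedness part of \cref{thmBBOSS}.

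Second, I would invoke the strong law of large numbers. Let $(u_{0,m},\dots,u_{N-1,m})_{m=1}^M$ be i.i.d.\ uniform samples and let $(s_{1,m},\dots,s_{N,m})$ be the corresponding transformed trajectories together with the accompanying $\widehat{p}^*_{n,m}$ and $\widetilde{p}_{n,m}$. Each factor $\widehat{p}^*_{n,m},\widetilde{p}_{n,m}\in[0,1]$, so integrability of the summand follows from integrability of $q(\widehat{S}_N)$ transported through the substitution. Hence
\begin{equation*}
\overline{P_M}=\frac{1}{M}\sum_{m=1}^{M}\!\left[q(s_{N,m})\prod_{n=0}^{N-1}(1-\widehat{p}^*_{n,m})\prod_{n=0}^{N-1}\widetilde{p}_{n,m}\right]\;\xrightarrow{\text{a.s.}}\;\mathbb{E}[\widetilde{P}]=\mathbb{E}[\widehat{P}],
\end{equation*}
which is the Brownian bridge approximation of $PV_{t_0}$.

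The main obstacle is therefore not in the corollary itself but in \cref{thmBBOSS}: one has to check that the quadratic substitution arising in the Milstein case is well defined along every surviving path, i.e.\ that the discriminant inside $\Phi^{-1}(\widetilde{p}_n^{\pm})$ stays nonnegative, that $\widetilde{p}_n=\widetilde{p}_n^{+}-\widetilde{p}_n^{-}>0$ outside the knocked-out set, and that the map $u\mapsto\widetilde{S}_{n+1}(u)$ is measurable; the degenerate case $\sigma'\equiv 0$ collapses to the simple Euler substitution treated above. Once these points are secured in the theorem, \cref{mcoss} is a direct Monte Carlo realization of the resulting integral representation.
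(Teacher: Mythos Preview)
Your proposal is correct and follows essentially the same approach as the paper: the corollary is stated without proof there, being the immediate Monte Carlo realization of the integral representation on $[0,1]^N$ established in \cref{thmBBOSS}, exactly as you outline. Your added remarks on integrability and on the well-definedness of the Milstein substitution are more careful than what the paper spells out, but they do not change the route.
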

\Cref{test2} presents a procedure for an estimator of \cref{mc1} while using the Milstein scheme.

\begin{algorithm}\label{algorithm2}
\caption{The one-step survival Brownian bridge Monte Carlo estimator for an up-and-out barrier option}\label{test2}
\begin{algorithmic}[1]
\STATE Initialize random seed
\FOR {$m=1,\dots,M$} 
\FOR {$n=0:N-1$}
\IF{$\sigma'(\widetilde{S}_{n},t_n)\neq 0$}
\STATE $\widetilde{p}^{+}_n=\Phi \left(\frac{-1+ \sqrt{1+4\left(\frac{1}{2} \sigma'(\widetilde{S}_n,t_n) \right)\left(\frac{B-\widetilde{S}_n-\mu(\widetilde{S}_n,t_n)h +\frac{1}{2}\sigma(\widetilde{S}_n,t_n) \sigma'(\widetilde{S}_n,t_n)h}{\sigma(\widetilde{S}_n,t_n) }\right)}}{ \sigma'(\widetilde{S}_n,t_n) \sqrt{h}}\right)$
\STATE $\widetilde{p}^{-}_n=\Phi \left(\frac{-1- \sqrt{1+4\left(\frac{1}{2} \sigma'(\widetilde{S}_n,t_n) \right)\left(\frac{B-\widetilde{S}_n-\mu(\widetilde{S}_n,t_n)h +\frac{1}{2}\sigma(\widetilde{S}_n,t_n) \sigma'(\widetilde{S}_n,t_n)h}{\sigma(\widetilde{S}_n,t_n) }\right)}}{ \sigma'(\widetilde{S}_n,t_n) \sqrt{h}}\right)$
\STATE $\widetilde{p}_n=\widetilde{p}^{+}_n- \widetilde{p}^{-}_n$
\STATE Sample $u \sim U(0,1)$
\STATE $\widetilde{S}_{n+1}=\widetilde{S}_{n}+\mu(\widetilde{S}_{n},t_n)h+ \sigma(\widetilde{S}_{n},t_n) \sqrt{h}\Phi^{-1}(\widetilde{p}^{-}_n+\widetilde{p}_n u)$
\STATE \hspace{30mm}$\frac{1}{2}\sigma(\widetilde{S}_n,t_n) \sigma'(\widetilde{S}_n,t_n)h\left( (\Phi^{-1}(\widetilde{p}^{-}_n+\widetilde{p}_n u))^2-1\right)$

  \ELSE
    \STATE $\widetilde{p}_n=\Phi \left( \frac{B-\widetilde{S}_n-\mu(\widetilde{S}_n,t_n)h}{\sigma(\widetilde{S}_n,t_n)\sqrt{h}} \right)$
    \STATE Sample $u \sim U(0,1)$
\STATE $\widetilde{S}_{n+1}=\widetilde{S}_{n}+\mu(\widetilde{S}_{n},t_n)h+ \sigma(\widetilde{S}_{n},t_n) \sqrt{h}\Phi^{-1}(\widetilde{p}_n u)$
  \ENDIF
\STATE $\widehat{p}^*_n=\exp \left( \frac{-2(B-\widetilde{S}_n)(B-\widetilde{S}_{n+1})}{\sigma(\widetilde{S}_{n},t_n)^2 h}  \right)$  
\ENDFOR
\STATE $\widetilde{P}_m=q(\widetilde{S}_N) \prod_{n=0}^{N-1}(1-\widehat{p}_n^*)\prod_{n=0}^{N-1}\widetilde{p}_n $
\ENDFOR
\STATE \textbf{return} $PV_{t_0}:=\frac{1}{M}\sum_{m=1}^M \widetilde{P}_m$
\end{algorithmic}
\end{algorithm}

Now, we present the main theorem, proving unbiasedness and a variance reduction property for the new one-step survival Brownian bridge approach.

\begin{theorem}\label{thmBBOSS}
Consider an SDE \eqref{sde} with positive volatility and the modified Milstein scheme:
\begin{align}
\begin{split}\widetilde{S}_{n+1}&=\widetilde{S}_{n}+\mu(\widetilde{S}_{n},t_n)h+ \sigma(\widetilde{S}_{n},t_n) \sqrt{h}\Phi^{-1}(\widetilde{p}^{-}_n+\widetilde{p}_n u_{n})\\
&\quad \quad + \frac{1}{2}\sigma(\widetilde{S}_n,t_n) \sigma'(\widetilde{S}_n,t_n)h\left( (\Phi^{-1}(\widetilde{p}^{-}_n+\widetilde{p}_n u_{n}))^2-1\right),\end{split}\label{milsteinS}
\end{align}
for the steps $n=0,\dots,N-1$, with $u_{n}\sim \mathcal{U}(0,1)$ i.i.d., $\widetilde{S}_0=S_0$, the cumulative standard normal distribution function $\Phi$, the survival probabilities
\begin{align}
\widetilde{p}_n&=\widetilde{p}^{+}_n- \widetilde{p}^{-}_n,\label{pcombined}
\end{align}
and with
\begin{align}
\widetilde{p}^{+,-}_n&=\Phi \left(\frac{-1\pm \sqrt{1+4\left(\frac{1}{2} \sigma'(\widetilde{S}_n,t_n) \right)\left(\frac{B-\widetilde{S}_n-\mu(\widetilde{S}_n,t_n)h +\frac{1}{2}\sigma(\widetilde{S}_n,t_n) \sigma'(\widetilde{S}_n,t_n)h}{\sigma(\widetilde{S}_n,t_n) }\right)}}{ \sigma'(\widetilde{S}_n,t_n) \sqrt{h}}\right).\label{milsteinp}
\end{align}
Furthermore, for each $n=0,\dots,N-1$ with $\sigma'(\widetilde{S}_n,t_n)=0$ consider the simplified scheme
\begin{align}
\widetilde{S}_{n+1}&=\widetilde{S}_{n}+\mu(\widetilde{S}_{n},t_n)h+ \sigma(\widetilde{S}_{n},t_n) \sqrt{h}\Phi^{-1}(\widetilde{p}_n u_{n})\label{modifiedscheme}
\end{align}
with
\begin{align}
\widetilde{p}_n&=\Phi \left( \frac{B-\widetilde{S}_n-\mu(\widetilde{S}_n,t_n)h}{\sigma(\widetilde{S}_n,t_n)\sqrt{h}} \right).\label{pforzerovol}
\end{align}
Then, the one-step survival Brownian bridge approximation defined by
\begin{align}
\begin{split}
\widetilde{P}\left(\widetilde{S}_N,\widehat{p}^*_0,\dots,\widehat{p}^*_{N-1},\widetilde{p}_0,\dots,\widetilde{p}_{N-1}\right):= q(\widetilde{S}_{N}) \prod\limits_{n=0}^{N-1} \left( 1- \widehat{p}^*_n\right) \prod\limits_{n=0}^{N-1}  \widetilde{p}_n \end{split},\label{IntBBOSS}
\end{align}
with
\begin{align}
\widehat{p}^*_n=\exp \left( \frac{-2(B-\widetilde{S}_n)(B-\widetilde{S}_{n+1})}{\sigma(\widetilde{S}_{n},t_n)^2 h}  \right).\label{pBB1}
\end{align}
satisfies
\begin{align}
\mathbb{E}[\widetilde{P}]&=\mathbb{E}[\widehat{P}]\label{ewgleich}
\end{align}
and
\begin{align}
\text{Var}[\widetilde{P}] &\le \text{Var}[\widehat{P}]\label{varreduction}.
\end{align}

\end{theorem}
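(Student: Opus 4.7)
The proof splits naturally into the unbiasedness claim \eqref{ewgleich} and the variance reduction claim \eqref{varreduction}; the latter will follow essentially for free from the change-of-variables apparatus set up for the former.

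For \eqref{ewgleich}, my plan is to proceed by induction on $N$, iterating the single-step derivation already carried out in the text between \eqref{pvinsert} and \eqref{p0n1}. The base case $N=1$ is exactly that derivation, treating the regular case $\sigma'(\widetilde{S}_0,t_0)\neq 0$ (quadratic inversion producing $\widetilde{p}_0^{\pm}$ as in \eqref{milsteinp}) and the degenerate case $\sigma'(\widetilde{S}_0,t_0)=0$ (linear inversion as in \eqref{pforzerovol}) separately. For the inductive step, I would condition on $\widehat{S}_0,\dots,\widehat{S}_n$ via the tower property, peel off the innermost Gaussian integral $\int \phi(z_n)(1-\widehat{p}_n)[\dots]\,dz_n$, and execute the same three moves as in the base case: (i) restrict the domain to $\{\widehat{S}_{n+1}(z_n)<B\}$, since $1-\widehat{p}_n$ vanishes on its complement because $(B-\widehat{S}_{n+1})^{+}=0$ there; (ii) multiply and divide by $\widetilde{p}_n$ to renormalize the conditional density; (iii) substitute $z_n=\Phi^{-1}(\widetilde{p}_n^{-}+u_n\widetilde{p}_n)$ (respectively $z_n=\Phi^{-1}(\widetilde{p}_n u_n)$ in the degenerate case), which sends $\widehat{S}_{n+1}$ pointwise to $\widetilde{S}_{n+1}$ and turns $\widehat{p}_n$ into $\widehat{p}_n^{*}$ on the survival region. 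Iterating through all $N$ steps collects the factors $\prod_n (1-\widehat{p}_n^{*})\prod_n \widetilde{p}_n$ and replaces $\widehat{S}$ by $\widetilde{S}$ throughout, yielding \eqref{ewgleich}.

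For \eqref{varreduction}, the crucial observation is that applying the identical chain of substitutions to $\widetilde{P}^2$ rather than $\widetilde{P}$ leaves one extra factor $\widetilde{p}_n$ per step: the integrand carries $\widetilde{p}_n^{2}$ while the Jacobian of the inverse-CDF substitution $u_n\mapsto z_n$ contributes only $1/\widetilde{p}_n$. Reversing the substitutions therefore yields the clean representation
\begin{equation*}
\mathbb{E}[\widetilde{P}^2]\;=\;\mathbb{E}\!\left[\widehat{P}^2\cdot\prod_{n=0}^{N-1}\widetilde{p}_n\right]\;\le\;\mathbb{E}[\widehat{P}^2],
\end{equation*}
since each $\widetilde{p}_n\in[0,1]$. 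Combined with \eqref{ewgleich} and the identity $\text{Var}[X]=\mathbb{E}[X^2]-\mathbb{E}[X]^2$, this gives \eqref{varreduction} immediately.

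I expect the main technical obstacle to be the case analysis around the quadratic inversion \eqref{milsteinp}. One must verify that the discriminant is non-negative on the survival region, that the set $\{z_n:\widehat{S}_{n+1}(z_n)<B\}$ coincides with the interval $(\Phi^{-1}(\widetilde{p}_n^{-}),\Phi^{-1}(\widetilde{p}_n^{+}))$ (with the correct orientation depending on the sign of $\sigma'(\widetilde{S}_n,t_n)$, which decides whether the underlying parabola opens upward or downward), and that the quadratic formula degenerates to the linear case \eqref{pforzerovol} as $\sigma'\to 0$. The positive-volatility assumption is what makes the inverse-CDF substitution well-defined throughout, and the identification $\widehat{p}_n=\widehat{p}_n^{*}$ on the survival region (where both $B-\widetilde{S}_n>0$ and $B-\widetilde{S}_{n+1}>0$) is what allows the last step to absorb the $(\cdot)^{+}$ operations into the unrestricted exponential in \eqref{pBB1}.
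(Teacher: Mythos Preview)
Your proposal is correct and follows essentially the same route as the paper: iterated application of the single-step restriction/renormalization/inverse-CDF substitution to prove \eqref{ewgleich}, and the same substitution applied to the squared payoff together with $\widetilde{p}_n^2\le\widetilde{p}_n$ to obtain \eqref{varreduction}. The only cosmetic difference is that the paper compares second moments in the $u$-coordinates (bounding $\prod\widetilde{p}_n\ge\prod\widetilde{p}_n^2$ inside the transformed integral for $\mathbb{E}[\widehat{P}^2]$), whereas you reverse the substitution and compare in the $z$-coordinates via $\mathbb{E}[\widetilde{P}^2]=\mathbb{E}[\widehat{P}^2\prod\widetilde{p}_n]\le\mathbb{E}[\widehat{P}^2]$; your explicit flagging of the sign/discriminant case analysis for \eqref{milsteinp} is a point the paper's proof simply omits.
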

\begin{proof}

First, we verify the equivalence of the expected values \eqref{BBint} and \eqref{IntBBOSS}. For simplicity, we will omit the cases $\sigma'(\widehat{S}_n,t_n)=0$ in the following derivation since they are applied straightforwardly.

Considering $N=2$, the expected value is given by
\begin{align*}
\mathbb{E}[\widehat{P}]=& \widetilde{p}_0\int\limits_{0}^{1}\left( 1- \widehat{p}^*_0\right) \left[\int\limits_{\widehat{S}_2(z)<B}\phi(z)  \left( 1- \widehat{p}_1\right)q\left(\widehat{S}_2(z,u)\right)\diff z\right]\diff u,
\end{align*}
with
\begin{align*}
\widehat{S}_2(z,u)=\widetilde{S}_{1}(u)&+\mu(\widetilde{S}_{1}(u),t_1)h+ \sigma(\widetilde{S}_{1}(u),t_1)z\\
&+ \frac{1}{2}\sigma(\widetilde{S}_{1}(u),t_1) \sigma'(\widetilde{S}_{1}(u),t_1)h\left( z^2-1\right).
\end{align*}
Again by splitting, substituting and using $\widetilde{p}_1=\widetilde{p}_1^{-}-\widetilde{p}_1^{+}$ we obtain
\begin{align*}
\mathbb{E}[\widehat{P}]&=\widetilde{p}_0\int\limits_{0}^{1}\left( 1- \widehat{p}^*_0\right) \int\limits_{\widehat{S}_2(z)<B}\phi(z)  \left( 1- \widehat{p}_1\right)q\left(\widehat{S}_2(z,u)\right)\diff z\diff u,\\
&=\widetilde{p}_0\int\limits_{0}^{1}\left( 1- \widehat{p}^*_0\right) \int\limits_{\Phi^{-1}(\widetilde{p}_1^{-})}^{\Phi^{-1}(\widetilde{p}_1^{+})}\phi(z)  \left( 1- \widehat{p}_1\right)q\left(\widehat{S}_2(z,u)\right)\diff z\diff u,\\
&=\widetilde{p}_0\int\limits_{0}^{1}\left( 1- \widehat{p}^*_0\right) \widetilde{p}_1\int\limits_{0}^{1}\left( 1- \widehat{p}^*_1\right)q\left(\widehat{S}_2(u_1,u_0)\right)\diff u_{1}\diff u_{0},
\end{align*}
with
\begin{align*}
\widetilde{S}_{2}(u_{1},u_{0})&=\widetilde{S}_1(u_{0})+\mu(\widetilde{S}_1(u_{0}),t_1)h+ \sigma(\widetilde{S}_1(u_{0}),t_1)\Phi^{-1}(\widehat{p}^{-}_1+\widetilde{p}_1u_{1})\\
&+\frac{1}{2}\sigma(\widetilde{S}_1(u_{0}),t_1) \sigma'(\widetilde{S}_1(u_{0}),t_1)h\left( (\Phi^{-1}(\widehat{p}^{-}_1+\widehat{p}_1 u_{1}))^2-1\right),\\
\widehat{p}_1^*&=\exp \left( \frac{-2(\widetilde{S}_1-B)(\widetilde{S}_{2}-B)}{\sigma(\widetilde{S}_1,t_1)^2 h}  \right)
\end{align*}
and $\hat{p}_0^*$ from \eqref{p0n1}.
For general $N$, we obtain
\begin{align}
\begin{split}
\mathbb{E}[\widehat{P}]&= \int\limits_{-\infty}^{\infty}\cdots \int\limits_{-\infty}^{\infty}\phi(z_{0})\cdots \phi(z_{N-1})  q(\widehat{S}_{N}(z_{N-1},\dots,z_{0}))  \prod\limits_{n=0}^{N-1} \left( 1- \widehat{p}_n\right)\diff z_{N-1}\cdots \diff z_{0},\\
&=\int\limits_{0}^{1}\cdots \int\limits_{0}^{1}q(\widetilde{S}_{N}(u_{N-1},\dots,u_{0}))\prod\limits_{n=0}^{N-1} \left( 1- \widehat{p}^*_n\right)\prod\limits_{n=0}^{N-1} \widetilde{p}_n  \diff u_{{N-1}}\cdots \diff u_{0}\\
&=\mathbb{E}[\widetilde{P}],\end{split}\label{ewgleich2}
\end{align}
with \eqref{milsteinp}, \eqref{pcombined}, \eqref{milsteinS} and \eqref{pBB1}.
To prove \eqref{varreduction}, we use analogue techniques leading to
\begin{align}
\begin{split}
\mathbb{E}[\widehat{P}^2]&= \int\limits_{-\infty}^{\infty}\cdots \int\limits_{-\infty}^{\infty}\phi(z_{0})\cdots \phi(z_{N-1}) q(\widehat{S}_{N}(z_{N-1},\dots,z_{0}))^2 \\
&\quad \quad \cdot \prod\limits_{n=0}^{N-1} \left( 1- \widehat{p}_n\right)^2 \diff z_{N-1}\cdots \diff z_{0},\\
&=\int\limits_{0}^{1}\cdots \int\limits_{0}^{1}q(\widetilde{S}_{N}(u_{N-1},\dots,u_{0}))^2\prod\limits_{n=0}^{N} \left( 1- \widehat{p}^*_n\right)^2\prod\limits_{n=0}^{N} \left( \widetilde{p}_n\right) \diff u_{{N-1}}\cdots \diff u_{0}\\
&\ge \int\limits_{0}^{1}\cdots \int\limits_{0}^{1}q(\widetilde{S}_{N}(u_{N-1},\dots,u_{0}))^2\prod\limits_{n=0}^{N} \left( 1- \widehat{p}^*_n\right)^2\prod\limits_{n=0}^{N} \left( \widetilde{p}_n\right)^2 \diff u_{{N-1}}\cdots \diff u_{0}\\
&=\mathbb{E}[\widetilde{P}^2],\end{split}\label{varungleich}.
\end{align}
which holds since we have $\widetilde{p}_n\in [0,1]$ and therefore, we have
\begin{align}
\prod\limits_{n=0}^{N}  \widetilde{p}_n^2\le \prod\limits_{n=0}^{N} \widetilde{p}_n.\label{varianzmeaning}
\end{align}
All in all, by using the unbiasedness of the first part, we obtain
\begin{align*}
\mbox{Var}[\widetilde{P}]=\mathbb{E}[\widetilde{P}^2]-\mathbb{E}[\widetilde{P}]^2=\mathbb{E}[\widetilde{P}^2]-\mathbb{E}[\widehat{P}]^2 \le \mathbb{E}[\widehat{P}^2]-\mathbb{E}[\widehat{P}]^2 =\mbox{Var}[\widehat{P}],
\end{align*}
which completes the proof.
\end{proof}

\begin{remark}
The variance reduction is a consequence of \eqref{varianzmeaning}, which is most significant near the barrier.
\end{remark}

\begin{remark}
Suppose we use the Euler-Maruyama scheme for the discretization of the Brownian bridge approach. In that case, the one-step survival Brownian bridge approach can be defined through a modified scheme, as in \eqref{modifiedscheme}, which will analogously lead to the unbiasedness and variance reduction property. Similar ideas hold for \cref{test2}: By omitting line 4 to 10, we can modify the algorithm to only use the Euler-Maruyama scheme for the discretization.
\end{remark}

\begin{corollary}
Consider an SDE \eqref{sde} with drift and positive volatility so that the Brownian bridge approximation \eqref{brownianp}, using a Milstein or Euler scheme converges with $\alpha>0$ in the sense of \eqref{alpha}. Then, the one-step survival Brownian bridge approach, using the respective modified scheme, satisfies convergence with order $\alpha$.
\end{corollary}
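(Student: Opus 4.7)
The plan is to observe that this corollary is essentially an immediate consequence of the unbiasedness equation \eqref{ewgleich} established in \Cref{thmBBOSS}, combined with linearity of expectation. The heavy lifting has already been done in showing $\mathbb{E}[\widetilde{P}] = \mathbb{E}[\widehat{P}]$; the corollary just transports the given convergence rate for $\widehat{P}$ across that equality to $\widetilde{P}$.

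Concretely, I would proceed as follows. By the assumption on the Brownian bridge approximation, there exists a constant $C>0$ such that
\begin{align*}
\mathbb{E}[P - \widehat{P}] \le C h^\alpha,
\end{align*}
for sufficiently small step sizes $h$, where the respective Milstein or Euler scheme is used inside $\widehat{P}$. Applying \Cref{thmBBOSS} (in the Milstein case, or the Euler variant noted in the remark following its proof), the one-step survival Brownian bridge approximation $\widetilde{P}$ built from the corresponding modified scheme satisfies $\mathbb{E}[\widetilde{P}] = \mathbb{E}[\widehat{P}]$. Substituting this into the weak error gives
\begin{align*}
\mathbb{E}[P - \widetilde{P}] = \mathbb{E}[P] - \mathbb{E}[\widetilde{P}] = \mathbb{E}[P] - \mathbb{E}[\widehat{P}] = \mathbb{E}[P - \widehat{P}] \le C h^\alpha,
\end{align*}
which is precisely the convergence statement \eqref{alpha} with rate $\alpha$ for the one-step survival Brownian bridge approach.

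There is essentially no obstacle here beyond bookkeeping: the only point requiring minor care is to make sure the unbiasedness identity is invoked with the correct pairing of schemes, i.e.\ Milstein $\widehat{P}$ paired with the modified Milstein $\widetilde{P}$ from \eqref{milsteinS}--\eqref{pforzerovol}, and Euler $\widehat{P}$ paired with its Euler-type modified scheme as indicated in the remark after \Cref{thmBBOSS}. Since the unbiasedness was proved in both cases, the argument goes through identically and the rate $\alpha$ is preserved exactly, with the same constant $C$.
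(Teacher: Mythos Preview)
Your proposal is correct and matches the paper's approach: the paper states this corollary without an explicit proof, treating it as an immediate consequence of the unbiasedness identity \eqref{ewgleich} from \Cref{thmBBOSS}, which is precisely the argument you give. The bookkeeping point you raise about pairing each scheme with its respective modified version is exactly what the phrase ``using the respective modified scheme'' in the statement is meant to encode.
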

By forcing the path to stay below the barrier, we smooth out the indicator function, leading to a Lipschitz-continuous first derivative. As explained above, the Lipschitz-continuous first derivative will be the key point to achieve stable second-order Greeks. 
\begin{remark}
A modified substitution leads to an extension to down-and-out barrier options, as, e.g., explained in \cite{gerstner2018monte}. The extension to knock-in options is not straightforward. However, with the in-out parity, the pathwise sensitivities can be calculated through the pathwise sensitivities of knock-out barrier options and plain vanilla options.
\end{remark}

\subsection{Partial derivatives, pathwise sensitivities, and finite difference:\\ first-order Greeks}

%\Comment{wiederholung stable diff alm et al.?}
In this section we will study different ways of calculating first-order Greeks for barrier options with payoff \eqref{payoffcont}.
%pathwise sensitivities without OSS but with BB in [Burgos].\\
Therefore, consider the stochastic differential equation
\begin{align}
dS(t)=\mu(S,t,a)\diff t + \sigma(S,t,b) \diff W(t), \text{ \hspace{3mm} } 0<t\le T,\label{sde-parameter}
\end{align}
with initial value $S(0)=s_0\in \mathbb{R}_+$, time of maturity $T\in\mathbb{R}_+$, drift $\mu(S,t,a)$ and volatility $\sigma(S,t,b)$, with $a=(a_1,\dots,a_m)\in \mathbb{R}^m$ and $b=(b_{1},\dots,b_{s})\in \mathbb{R}^s$.

Let the stochastic process ${S}_{a,b,s_0,T}$ be the solution of the SDE \eqref{sde-parameter} defined by the parameters $a,b,s_0$ and $T$. Consider a family of barrier payoff functions $V(S,v)$, with $v=(v_1,\dots,v_r) \in \mathbb{R}^r$. We are be interested in the expected value of
\begin{align}
P:\Theta \mapsto V\left({S}_{a,b,s_0,T}(T),v\right),\label{py}
\end{align}
for
\begin{align}
\Theta:=(a,b,v,s_0,T)\in Y  \subset \mathbb{R}^{V}\times \mathbb{R}_+^2,\label{training-intervall}
\end{align}
with $V=m+s+r$. From section 2.6.2 of \cite{burgosdis} we know that the derivatives of the survival probabilities \eqref{BBint} of the Brownian bridge approach for an up-and-out barrier option $\widehat{P}$ with respect to $\Theta$, assuming $\widehat{P}(\Theta)$ sufficiently regular in $\Theta$, are given by
\begin{align}
\frac{\partial \widehat{P}}{\partial \Theta_i}&= \left( \mathbf{1}_{\widehat{S}_{N}>K} \frac{\partial \widehat{S}_{N}}{\partial \Theta_i} \prod\limits_{n=0}^{N-1} \left(1-\widehat{p}_n\right)\right.\notag\\
&\hspace{10mm}\left.+ \left( \widehat{S}_{N}-K\right)^+ \sum\limits_{n=0}^{N-1}\left[ \prod\limits_{k=0,k \neq t}^{N-1} \left( 1- \widehat{p}_k(\Theta,u)\right) \frac{\partial \widehat{p}_n}{\partial \Theta_i} \right]\right).\label{burgosPV}
\end{align}
with 
\begin{align*}
\frac{\partial \widehat{p}_n}{\partial \Theta_i}= \mathbf{1}_{\widehat{S}_{n},\widehat{S}_{n+1}<B}\widehat{p}_n\left[ \frac{-2(B-\widehat{S}_{n+1})}{\sigma(\widehat{S}_{n},t_n)^2 h}+\frac{-2(B-\widehat{S}_n)}{\sigma(\widehat{S}_{n},t_n)^2 h}+\frac{4(B-\widehat{S}_{n+1})(B-\widehat{S}_{n})}{\sigma(\widehat{S}_{n},t_n)^3 h} \right]
\end{align*}
and $\frac{\partial \widehat{S}_n}{\partial \Theta_i}$, as similarly defined later. As mentioned above, we will see that the one-step survival Brownian bridge approach does not need the indicator functions, as in $\frac{\partial \widehat{p}_n}{\partial \Theta_i}$, leading to the following considerations. 
Before presenting the following theorem, we wish to remark that we can smooth out the indicator function arising in \eqref{burgosPV} by forcing the path to stay between $B$ and $K$ at the final step, see, e.g., \cite{gerstner2018monte} for further information. However, to keep the presentation simple, we omit the smoothing in the following theorem. Furthermore, we will use a more general notation for \eqref{milsteinS} to \eqref{pBB1}. 

\begin{theorem}\label{thm:OSSpathwisesensitivities}
The partial derivatives of the one-step survival Brownian bridge payoff $\widetilde{P}$ with respect to a vector of inputs $\Theta$, if $\widehat{S}(\Theta)$ is sufficiently regular in $\Theta$, are given by
%\begin{align}
%\frac{\partial \widetilde{PV}_{t_0}}{\partial \Theta_j}(\Theta)= e^{-rT\Delta t}\int\limits_{0}^{1}\cdots \int\limits_{0}^{1}\frac{ \partial \left(q^{*}\left(\Theta,u\right)\right)}{\partial \Theta_j}\diff u^{(T)}\cdots \diff u_{1}\label{pathwisepayoff}
%\end{align}
%with
\begin{align}
\begin{split}\frac{\partial \widetilde{P}}{\partial \Theta_i}
=&\left( \mathbbm{1}_{\widetilde{S}_{N}>K}\frac{\partial \widetilde{S}_{N}}{\partial \Theta_i} \prod\limits_{j=0}^{N-1}\widetilde{p}_j\prod\limits_{n=0}^{N-1} \left(1-\widehat{p}^*_n\right)\right.\\
 & \hspace{0mm}\left.+q(\widetilde{S}_{N}) \sum\limits_{j=0}^{N-1}\left[\frac{\partial \widetilde{p}_j}{\partial \Theta_i} \prod\limits_{k \neq j}^{N-1} \widetilde{p}_k\right]\prod\limits_{n=0}^{N-1} \left(1-\widehat{p}^*_n\right)\right.\\
&\left. - q(\widetilde{S}_{N}) \sum\limits_{n=0}^{N-1}\left[ \prod\limits_{k=0,k \neq t}^{N-1} \left( 1- \widehat{p}^*_k\right) \frac{\partial \widehat{p}^*_n}{\partial \Theta_i} \right]\prod\limits_{j=0}^{N-1}\widetilde{p}_j \right).\end{split}\label{pathwiseq}
\end{align}
where $\widetilde{P}, \widetilde{p}_n, \widehat{p}_n^*$, and $\widetilde{S}$ depend on $(\Theta,u)$. The derivatives of $\widetilde{p}^{-}_n(\Theta,u)$, $\widetilde{p}_n(\Theta,u)$, $\widetilde{S}_{n}(\Theta,u)$, $\widehat{p}_n(\Theta,u)$, ${\mu}_n(\Theta,u)$, ${\sigma}_n(\Theta,u)$, and ${\sigma}'_n(\Theta,u)$ are computed recursively as follows: 
Let $f_2(*_5)_{\left| \substack{*_2}\right.}:=\widetilde{p}_n^{-}(\Theta,u)$, with
\begin{align*}
*_5&:=(\nu,\varsigma,\varsigma',s,\vartheta)\\
*_2&:= \left(\nu=\mu(\widetilde{S}_n(\Theta,u),t_n),\varsigma=\sigma(\widetilde{S}_n(\Theta,u),t_n),\varsigma'=\sigma'(\widetilde{S}_n(\Theta,u),t_n),\right.\\ &\hspace{40mm} \left. s=\widetilde{S}_n(\Theta,u), \vartheta=\Theta\right).
\end{align*}
I.e., we will use the symbolic vector $*_5$ for the calculation of partial derivatives of $\widetilde{p}_n^{-}(\Theta,u)$ and $*_2$ will be the vector of inputs at the computation process. Then, the partial derivatives are recursively given by
\begin{align*}
\frac{\partial \widetilde{p}_n^{-}}{\partial \Theta_i}(\Theta,u)&=\frac{\partial f_2}{\partial s}(*_5)_{\left| \substack{*_2}\right.}\frac{\partial \widetilde{S}_n}{\partial \Theta_i}(\Theta,u)
+\frac{\partial f_2}{\partial \nu}(*_5)_{\left| \substack{*_2}\right.}\frac{\partial \mu_n}{\partial \Theta_i}(\Theta,u)\\
&+\frac{\partial f_2}{\partial \varsigma}(*_5)_{\left| \substack{*_2}\right.}\frac{\partial \sigma_n}{\partial \Theta_i}(\Theta,u)
+\frac{\partial f_2}{\partial \varsigma'}(*_5)_{\left| \substack{*_2}\right.}\frac{\partial \sigma_n'}{\partial \Theta_i}(\Theta,u)
+\frac{\partial f_2}{\partial \vartheta_i}(*_5)_{\left| \substack{*_2}\right.}.
\end{align*}
The remaining derivatives are calculated and simulated through the following: Let $h(*_4)_{\left|\substack*_1\right.}:=\widehat{p}_n^*(\Theta,u)$, $
f_2(*_5)_{\left| \substack{*_2}\right.}:=\widetilde{p}_n^{-}(\Theta,u)$, $
f(*_5)_{\left| \substack{*_2}\right.}:=\widetilde{p}_n(\Theta,u)$, and $
g(*_6)_{\left| \substack{*_3}\right.}:=\widetilde{S}_{n+1}(\Theta,u)$,
with
\begin{align*}
*_1&:= \left({\varsigma=\sigma(\widetilde{S}_n(\Theta,u),t_n), s_1=\widetilde{S}_n(\Theta,u),s_2=\widetilde{S}_{n+1}(\Theta,u), \vartheta=\Theta}\right),\\
*_3&:=\left( \nu=\mu(\widetilde{S}_n(\Theta,u),t_n), \varsigma=\sigma(\widetilde{S}_n(\Theta,u),t_n), \varsigma'=\sigma'(\widetilde{S}_n(\Theta,u),t_n), \pi=\widetilde{p}_n(\Theta,u),\right.\\ &\hspace{30mm}\left. \pi_2=\widetilde{p}_n^{-}(\Theta,u), s=\widetilde{S}_n(\Theta,u), \vartheta=\Theta,\omega=u_{N-1}\right),\\
*_4&:=(\varsigma,s_1,s_2,\vartheta)\\
*_6&:=(\nu,\varsigma,\varsigma',\pi,\pi_2,s,\vartheta,\omega)
\end{align*}
and $u=(u_{N-1},\dots,u_{0})$. Then, the partial derivatives are recursively given by

\begin{align*}
\frac{\partial \widetilde{p}_n}{\partial \Theta_i}(\Theta,u)&=\frac{\partial f}{\partial s}(*_5)_{\left| \substack{*_2}\right.}\frac{\partial \widetilde{S}_n}{\partial \Theta_i}(\Theta,u)
+\frac{\partial f}{\partial \nu}(*_5)_{\left| \substack{*_2}\right.}\frac{\partial \mu_n}{\partial \Theta_i}(\Theta,u)
+\frac{\partial f}{\partial \varsigma}(*_5)_{\left| \substack{*_2}\right.}\frac{\partial \sigma_n}{\partial \Theta_i}(\Theta,u)\\
&+\frac{\partial f}{\partial \varsigma'}(*_5)_{\left| \substack{*_2}\right.}\frac{\partial \sigma_n'}{\partial \Theta_i}(\Theta,u)
+\frac{\partial f}{\partial \vartheta_j}(*_5)_{\left| \substack{*_2}\right.}\\
\frac{\partial \widehat{p}_n^*}{\partial \Theta_i}(\Theta,u)&=\frac{\partial h}{\partial s_1}(*_4)_{\left| \substack{*_1}\right.}\frac{\partial \widetilde{S}_n}{\partial \Theta_i}(\Theta,u)
+\frac{\partial h}{\partial s_2}(*_4)_{\left| \substack{*_1}\right.}\frac{\partial \widetilde{S}_{n+1}}{\partial \Theta_i}(\Theta,u)\\
&+\frac{\partial h}{\partial \varsigma}(*_4)_{\left| \substack{*_1}\right.}\frac{\partial \sigma_n}{\partial \Theta_i}(\Theta,u)
+\frac{\partial h}{\partial \vartheta_i}(*_4)_{\left| \substack{*_1}\right.},\\
\frac{\partial \widetilde{S}_{n+1}}{\partial \Theta_i}(\Theta,u)&=\frac{\partial g}{\partial \nu}(*_6)_{\left| \substack{*_3}\right.}\frac{\partial \mu_n}{\partial \Theta_i}(\Theta,u)
+\frac{\partial g}{\partial \varsigma}(*_6)_{\left| \substack{*_3}\right.}\frac{\partial \sigma_n}{\partial \Theta_i}(\Theta,u)
+\frac{\partial g}{\partial \varsigma'}(*_6)_{\left| \substack{*_3}\right.}\frac{\partial \sigma_n'}{\partial \Theta_i}(\Theta,u)\\
&+\frac{\partial g}{\partial \pi}(*_6)_{\left| \substack{*_3}\right.}\frac{\partial \widetilde{p}_n}{\partial \Theta_i}(\Theta,u)
+\frac{\partial g}{\partial \pi_2}(*_6)_{\left| \substack{*_3}\right.}\frac{\partial \widetilde{p}_n^{-}}{\partial \Theta_i}(\Theta,u)
+\frac{\partial g}{\partial s}(*_6)_{\left| \substack{*_3}\right.}\frac{\partial \widetilde{S}_n}{\partial \Theta_i}(\Theta,u)\\
&+\frac{\partial g}{\partial \vartheta_i}(*_6)_{\left| \substack{*_3}\right.}.
\end{align*}
%with
%\begin{align*}
%\widehat{p}_n^*(\Theta,u)&=h(*_4)_{\left|\substack*_1\right.}, \quad
%\widetilde{p}_n^{-}(\Theta,u)=f_2(*_5)_{\left| \substack{*_2}\right.},\\
%\widetilde{p}_n(\Theta,u)&=f(*_5)_{\left| \substack{*_2}\right.}, \quad
%\widetilde{S}_{n+1}(\Theta,u)=g(*_6)_{\left| \substack{*_3}\right.},
%\end{align*}
%and
The derivatives of the local drift and volatility are given by
\begin{align*}
\frac{\partial {\mu}_n}{\partial \Theta_i}(\Theta,u)&=\frac{\partial k}{\partial s}(s,\vartheta)_{\left| \substack{*_7}\right.}\frac{\partial \widetilde{S}_n}{\partial \Theta_i}(\Theta,u)+ \frac{\partial k}{\partial \vartheta_i}(s,\vartheta)_{\left| \substack{*_7}\right.},\\
\frac{\partial {\sigma}_n}{\partial \Theta_i}(\Theta,u)&=\frac{\partial l}{\partial s}(s,\vartheta)_{\left| \substack{*_7}\right.}\frac{\partial \widetilde{S}_n}{\partial \Theta_i}(\Theta,u)+ \frac{\partial l}{\partial \vartheta_i}(s,\vartheta)_{\left| \substack{*_7}\right.},\\
\frac{\partial {\sigma}_n'}{\partial \Theta_i}(\Theta,u)&=\frac{\partial m}{\partial s}(s,\vartheta)_{\left| \substack{*_7}\right.}\frac{\partial \widetilde{S}_n}{\partial \Theta_i}(\Theta,u)+ \frac{\partial m}{\partial \vartheta_i}(s,\vartheta)_{\left| \substack{*_7}\right.},
\end{align*}
with $*_7:=\left(s=\widetilde{S}_n(\Theta,u), \vartheta=\Theta\right)$.

\end{theorem}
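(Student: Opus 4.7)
The plan is a straightforward chain-rule computation on the pathwise expression
\eqref{IntBBOSS}, together with unwinding the recursive dependence of $\widetilde{S}_{n+1}$,
$\widetilde{p}_n$, $\widetilde{p}_n^{-}$, and $\widehat{p}_n^*$ on $\Theta$ through the
modified Milstein scheme. First I would apply the product rule to
$\widetilde{P}=q(\widetilde{S}_N)\prod_{n=0}^{N-1}(1-\widehat{p}^*_n)\prod_{n=0}^{N-1}\widetilde{p}_n$
to obtain exactly the three summands appearing in \eqref{pathwiseq}: a boundary term
coming from $\partial q(\widetilde{S}_N)/\partial \Theta_i
=\mathbbm{1}_{\widetilde{S}_N>K}\,\partial \widetilde{S}_N/\partial \Theta_i$, plus the
logarithmic-derivative expansions of the two finite products, which produce the
sums over $j$ (for $\widetilde{p}_j$) and over $n$ (for $\widehat{p}^*_n$).

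Next I would treat each intermediate quantity one at a time, using the symbolic
argument lists $*_1,\dots,*_7$ to organize which partial derivatives carry chain-rule
contributions. For $\widetilde{p}_n^{-}$ I would differentiate
\eqref{milsteinp} viewed as $f_2(\nu,\varsigma,\varsigma',s,\vartheta)$ evaluated at
$*_2$; the chain rule then yields the stated expansion because the only ways
$\Theta_i$ enters \eqref{milsteinp} are through $\widetilde{S}_n(\Theta,u)$, through
the coefficient functions $\mu,\sigma,\sigma'$ evaluated at $(\widetilde{S}_n,t_n)$,
and possibly explicitly through $\Theta_i=B$ or other direct parameters, contributing
the $\partial f_2/\partial \vartheta_i$ term. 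The formulas for $\partial
\widetilde{p}_n/\partial \Theta_i$ and $\partial \widehat{p}_n^*/\partial \Theta_i$
follow identically from \eqref{pcombined} and \eqref{pBB1}. For
$\partial \widetilde{S}_{n+1}/\partial \Theta_i$ I would view
\eqref{milsteinS} as $g(\nu,\varsigma,\varsigma',\pi,\pi_2,s,\vartheta,\omega)$
(with $\omega=u_n$ inactive under differentiation in $\Theta_i$) and apply chain rule;
the six non-trivial contributions exactly reproduce the displayed formula. The local
coefficient derivatives $\partial \mu_n/\partial \Theta_i$, $\partial \sigma_n/\partial
\Theta_i$, $\partial \sigma_n'/\partial \Theta_i$ come from differentiating
$k(s,\vartheta)=\mu(s,\vartheta)$, $l=\sigma$, $m=\sigma'$ evaluated at $*_7$, which
again is a single chain-rule step.

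Finally, I would combine these recursions into an induction on $n$: at $n=0$ we have
$\widetilde{S}_0=S_0$, so $\partial\widetilde{S}_0/\partial\Theta_i$ is either $0$ or
the Kronecker contribution coming from $\Theta_i=s_0$, giving the base case; the
recursions above then propagate the derivative forward for $n=1,\dots,N$. Assembling
all terms into the product-rule decomposition from the first step yields
\eqref{pathwiseq}. The only potential obstacle is the differentiation of the call
payoff $q(x)=(x-K)^+$, which is only Lipschitz. This is handled exactly as in
Burgos–Giles and in \eqref{burgosPV}: under the stated regularity of $\widehat{S}$
(and hence of $\widetilde{S}_N$) in $\Theta$, the event $\{\widetilde{S}_N=K\}$ has
measure zero for almost every $u\in(0,1)^N$, so the pathwise derivative exists almost
surely and equals $\mathbbm{1}_{\widetilde{S}_N>K}\partial \widetilde{S}_N/\partial
\Theta_i$; all other factors are smooth in $\Theta$ because the one-step survival
construction has already removed the indicator functions from
$\widetilde{p}_n^{\pm}$ and $\widehat{p}_n^*$, which is precisely the point of the
reformulation leading to Lipschitz-continuous first derivatives. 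This justifies the
formal chain rule and closes the proof.
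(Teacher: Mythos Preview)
Your proposal is correct and follows essentially the same approach as the paper: the paper's proof amounts to the one-line observation that \eqref{pathwiseq} is the derivative of the integrand of \eqref{IntBBOSS}, followed by writing out the explicit symbolic forms of $f_2,f,g,h$ and stating that ``the recursive formulas follow through differentiation with the product rule.'' Your sketch simply spells out that product/chain-rule computation in more detail, including the induction on $n$ and the almost-sure justification for differentiating $q$ at the kink, both of which the paper leaves implicit.
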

\begin{proof}
\eqref{pathwiseq} is the derivative of the integrand of \eqref{IntBBOSS}.
For \eqref{milsteinp}, \eqref{milsteinS}, \eqref{pcombined} and \eqref{pBB1} we have
\begin{align*}
f_2(*_5)&=\Phi \left(\frac{-1- \sqrt{1+4\left(\frac{1}{2} \varsigma' \sqrt{\vartheta_1}\right)\left(\frac{\vartheta_2-s-\nu\vartheta_1 +\frac{1}{2}\varsigma \varsigma'\vartheta_1}{\varsigma \sqrt{\vartheta_1}}\right)}}{ \varsigma' \sqrt{\vartheta_1}}\right), \\
f(*_5)&=\Phi \left(\frac{-1+ \sqrt{1+4\left(\frac{1}{2} \varsigma' \sqrt{\vartheta_1}\right)\left(\frac{\vartheta_2-s-\nu\vartheta_1 +\frac{1}{2}\varsigma \varsigma'\vartheta_1}{\varsigma \sqrt{\vartheta_1}}\right)}}{ \varsigma' \sqrt{\vartheta_1}}\right), \\
&\quad -\Phi \left(\frac{-1- \sqrt{1+4\left(\frac{1}{2} \varsigma' \sqrt{\vartheta_1}\right)\left(\frac{\vartheta_2-s-\nu\vartheta_1 +\frac{1}{2}\varsigma \varsigma'\vartheta_1}{\varsigma \sqrt{\vartheta_1}}\right)}}{ \varsigma' \sqrt{\vartheta_1}}\right),\\
g\left(*_6\right)&=s+\nu \vartheta_1+ \varsigma \sqrt{\vartheta_1}\Phi^{-1}\left(\pi_2+\pi  \omega\right),\\
&\quad + \frac{1}{2}\varsigma \varsigma'\vartheta_1\left( (\Phi^{-1}\left(\pi_2+\pi  \omega\right))^2-1\right),\\
h(*_4)&=\exp \left( \frac{-2(\vartheta_2-s_1)(\vartheta_2-s_2)}{\varsigma^2\vartheta_1}\right).
\end{align*}
The recursive formulas follow through differentiation with the product rule.
\end{proof}

\begin{lemma}
Consider an SDE \eqref{sde} with drift and positive volatility such that the partial derivatives of the Brownian bridge approximation \eqref{brownianp}, using the Milstein- or Euler-Maruyama scheme, converges with $\alpha>0$ in the sense of \eqref{alpha}. Then, the partial derivatives of the one-step survival Brownian bridge approach satisfy convergence with order $\alpha$.
\end{lemma}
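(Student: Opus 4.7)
My plan is to reduce the convergence of the one-step survival partial derivatives to the assumed convergence of the Brownian bridge partial derivatives via a derivative analogue of the unbiasedness identity $\mathbb{E}[\widetilde{P}] = \mathbb{E}[\widehat{P}]$ proved in \Cref{thmBBOSS}. Concretely, I want to establish
\begin{align*}
\mathbb{E}\!\left[\frac{\partial \widetilde{P}}{\partial \Theta_i}\right] = \mathbb{E}\!\left[\frac{\partial \widehat{P}}{\partial \Theta_i}\right],
\end{align*}
after which the triangle inequality combined with the hypothesis immediately yields
\begin{align*}
\left| \mathbb{E}\!\left[\frac{\partial P}{\partial \Theta_i} - \frac{\partial \widetilde{P}}{\partial \Theta_i}\right] \right| = \left| \mathbb{E}\!\left[\frac{\partial P}{\partial \Theta_i} - \frac{\partial \widehat{P}}{\partial \Theta_i}\right] \right| \le C h^{\alpha},
\end{align*}
which is exactly the claimed convergence at order $\alpha$ in the sense of \eqref{alpha}.

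To obtain the derivative-level unbiasedness I would revisit the chain of equalities \eqref{ewgleich2}, which expresses $\mathbb{E}[\widehat{P}]$ first as a Gaussian integral in $(z_0,\dots,z_{N-1})$, then, after restricting to the region $\{\widehat{S}_{n+1}<B\}$ and substituting $z_n = \Phi^{-1}(\widetilde{p}_n^{-} + \widetilde{p}_n u_n)$, as the uniform integral $\mathbb{E}[\widetilde{P}]$ on $[0,1]^N$. Crucially, the substitution itself is parameter-free: only the integrand depends on $\Theta_i$, through the drift, volatility, survival probabilities and modified scheme appearing in \Cref{thm:OSSpathwisesensitivities}. I would therefore differentiate both sides of \eqref{ewgleich2} in $\Theta_i$ and interchange $\partial/\partial\Theta_i$ with the iterated integrals, using the regularity of $\widehat{P}(\Theta)$ and the recursive pathwise derivatives already assembled in \Cref{thm:OSSpathwisesensitivities}. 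This yields the partial-derivative analogue of \eqref{ewgleich2}, whose expectation form is precisely the identity displayed above. The analogous argument for the Euler--Maruyama scheme is identical, with $\sigma'\equiv 0$ and the simpler probability \eqref{pforzerovol} in place of \eqref{milsteinp}.

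The main obstacle is justifying the interchange of differentiation and expectation uniformly in the discretisation step $h$. On the Brownian bridge side this is already handled by the regularity hypothesis borrowed from \cite{burgosdis} under which \eqref{burgosPV} is derived; on the one-step survival side, however, $\widetilde{p}_n^{\pm}$ and $\widetilde{S}_{n+1}$ involve compositions of $\Phi^{-1}$ with quantities that depend on $\Theta_i$ and on prior uniforms $u_{n-1},\dots,u_0$, and the recursive formulas in \Cref{thm:OSSpathwisesensitivities} can blow up near the endpoints $u_n \in \{0,1\}$. I would address this by exploiting the strict positivity of $\sigma$ and the polynomial growth of the relevant derivatives of $\Phi^{-1}$ near $0$ and $1$, in order to produce an $h$-independent $L^1$-majorant for $\partial \widetilde{P}/\partial \Theta_i$ to which dominated convergence can be applied. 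Once this domination is in place, the exchange of derivative and integral in \eqref{ewgleich2} is legitimate, the unbiasedness of the derivatives follows, and the lemma is then a direct consequence of the assumed convergence of the Brownian bridge derivatives.
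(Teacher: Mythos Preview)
Your strategy---establish $\mathbb{E}[\partial_{\Theta_i}\widetilde{P}]=\mathbb{E}[\partial_{\Theta_i}\widehat{P}]$ by differentiating \eqref{ewgleich2} under the integral sign and then invoke the hypothesis---is exactly the paper's. The paper's justification of the interchange is shorter than your dominated-convergence plan: on the one-step survival side it simply cites the compact domain $[0,1]^N$ together with Lipschitz continuity of the integrand, and on the Brownian bridge side it first substitutes $z_n=\Phi^{-1}(u_n)$ step by step to likewise obtain a compact domain with a Lipschitz integrand, so that the interchange is immediate on both sides. Your concern about endpoint blow-up of $\Phi^{-1}$ is thus sidestepped rather than confronted via an $L^1$ majorant. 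One small imprecision: the substitution $z_n=\Phi^{-1}(\widetilde{p}_n^{-}+\widetilde{p}_n u_n)$ is not itself parameter-free, since $\widetilde{p}_n^{\pm}$ depend on $\Theta$; what is parameter-free---and what you actually need---is the resulting integration domain $[0,1]^N$.
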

\begin{proof}
Since having compact domains in \eqref{ewgleich2} and a Lipschitz-continuous integrand, the interchange of differentiation and expectation is justified for the one-step survival Brownian bridge approach. For the Brownian bridge approach, we can substitute the integral with $z=\Phi^{-1}(u)$ in each discretization step to obtain a Lipschitz-continuous integrand and compact domains. Thus, the one-step survival Brownian bridge derivatives are unbiased with respect to the Brownian bridge derivatives.
\end{proof}

For a convergence result on the Brownian bridge derivatives, assuming certain assumptions, we refer to section 7 of \cite{burgosdis}.
For both \eqref{burgosPV} and \eqref{pathwiseq}, one could formulate corollaries of an unbiased pathwise sensitivity Monte Carlo estimator. We remark that one can automate most of the calculations, e.g., as in \cite{gerstner2018monte} by MATLAB.

For a more accessible alternative, the first-order Greeks can be calculated with finite differences under a specific stability condition. Nevertheless, finite differences add an error source and should be used carefully.
\begin{definition}\label{stable}
We say that a Monte Carlo estimator $\overline{P_{M}}$ with Monte Carlo payoff $\mathcal{Q}$ allows for stable differentiation by finite differences if there exists $C>0$ such that 
\begin{align*}
\text{Var}\left( D_h \overline{P_{M}}\right) \le \frac{1}{M}C
\end{align*}
with a positive constant $C$.
\end{definition}
In theorem 2.2 of \cite{alm}, the authors show that if both $\widehat{P}$ and its Monte Carlo payoff depend Lipschitz-continuously on $\Theta$, the estimator allows for stable differentiation with respect to $\Theta$. This is the case for the Monte Carlo estimators $\overline{P_M}$ for both the Brownian bridge approximation $\widehat{P}$ and the one-step survival Brownian bridge approximation $\widetilde{P}$.

\subsection{Pathwise sensitivities, and finite difference: second-order Greeks}

In this section, we study three different ways to obtain second-order Greeks for barrier options. As already mentioned, \eqref{burgosPV} is not Lipschitz-continuous, and hence it is firstly not differentiable and secondly does not apply for theorem 2.2 of \cite{alm}. The mentioned theorem would imply stable second-order Greeks through the first-order Greeks' finite differences if the first-order Greeks were Lipschitz-continuous. However, we have a, at least twice, continuously differentiable payoff, for the one-step survival Brownian bridge approximation. One could calculate the Greeks through pathwise sensitivities, which can be done by a straightforward extension of \cref{thm:OSSpathwisesensitivities}. 

Alternatively, the theorem 2.2 of \cite{alm} can be applied, i.e., one could use finite differences of the first-order Greeks (gained through pathwise sensitivities or finite differences). We present a third alternative in the following theorem, which uses second-order finite differences and is noted quite generally.
\begin{theorem}\label{thm:fin}
If all, $PV_{t_0}(\Theta),PV_{t_0}'(\Theta),\mathcal{Q}(\Theta,u)$, and $\mathcal{Q}'(\Theta,u)$, depend Lipschitz-continuously on $\Theta$, resp., $(\Theta,u)$, then the Monte Carlo estimator  allows for stable second-order differentiation by means of \cref{stable}.
\end{theorem}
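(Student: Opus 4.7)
The plan is to reduce the claimed bound on $\text{Var}(D_h^2 \overline{P_M})$ to a pointwise uniform bound on $D_h^2 \mathcal{Q}(\Theta,u)$, which in turn follows from the fundamental theorem of calculus together with the Lipschitz-continuity of $\mathcal{Q}'$.

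First, by the linearity of the finite difference operator and the independence of the Monte Carlo samples $u_1,\dots,u_M$, I would write
\begin{align*}
D_h^2 \overline{P_M}(\Theta) &= \frac{1}{M}\sum_{m=1}^{M} D_h^2 \mathcal{Q}(\Theta,u_m),\\
\text{Var}\bigl(D_h^2 \overline{P_M}(\Theta)\bigr) &= \frac{1}{M}\,\text{Var}\bigl(D_h^2 \mathcal{Q}(\Theta,u)\bigr).
\end{align*}
Hence it suffices to exhibit $C>0$ with $\text{Var}(D_h^2 \mathcal{Q}(\Theta,u)) \le C$ uniformly in $h$.

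Second, I would invoke the Lipschitz-continuity of $\mathcal{Q}'(\cdot,u)$ with some constant $L$, which is uniform in $u$ thanks to the joint Lipschitz hypothesis on $\mathcal{Q}'$. The fundamental theorem of calculus yields
\begin{align*}
\mathcal{Q}(\Theta+h,u) + \mathcal{Q}(\Theta-h,u) - 2\mathcal{Q}(\Theta,u) = \int_0^h \bigl[\mathcal{Q}'(\Theta+s,u)-\mathcal{Q}'(\Theta-s,u)\bigr]\,\diff s,
\end{align*}
whose integrand is bounded in absolute value by $2Ls$. Dividing by $h^2$ gives $|D_h^2 \mathcal{Q}(\Theta,u)| \le L$ uniformly in $\Theta$, $u$, and $h$, so that $\text{Var}(D_h^2 \mathcal{Q}) \le L^2$ and $\text{Var}(D_h^2 \overline{P_M}) \le L^2/M$, which is exactly the stability condition of \cref{stable} with $C=L^2$.

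The Lipschitz assumptions on $PV_{t_0}$ and $PV_{t_0}'$ are not strictly needed for the variance bound above, but they guarantee that the deterministic target $D_h^2 PV_{t_0}$ of $D_h^2\overline{P_M}$ is itself well defined and uniformly bounded. Together with the Lipschitz pair $(\mathcal{Q},\mathcal{Q}')$ they bring the proof into the framework of Theorem~2.2 of \cite{alm}, applied once to $(PV_{t_0},\mathcal{Q})$ and once to $(PV_{t_0}',\mathcal{Q}')$. I expect the main obstacle to be mostly notational bookkeeping across the two Lipschitz pairs; the substantive content is the pointwise bound on $D_h^2\mathcal{Q}$ that comes from a single application of the Lipschitz-continuity of $\mathcal{Q}'$.
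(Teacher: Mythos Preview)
Your proposal is correct and follows essentially the same route as the paper: reduce to $\text{Var}(D_h^2\overline{P_M})=\tfrac{1}{M}\text{Var}(D_h^2\mathcal{Q})$, then use the fundamental theorem of calculus to rewrite $D_h^2\mathcal{Q}$ as an average of increments of $\mathcal{Q}'$ and invoke its Lipschitz constant. Your version is in fact slightly more direct than the paper's, which inserts $D_h^{(2)}PV_{t_0}(\Theta)$ as a centering term and then bounds it via the Lipschitz assumption on $PV_{t_0}'$; as you observe, that detour is not needed for the variance bound itself.
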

\begin{proof} 
For an estimator in the form \eqref{mc2} and $U \sim U(0,1)^N  $, we have that
\begin{align}
\text{Var}\left(D_h^{(2)}{\overline{P_{M}}(\Theta)}\right) 
%&= \text{Var}\left(D_h^{(2)}\frac{1}{M}\sum \mathcal{Q}\right)
%&= \frac{1}{N^2}\sum\text{Var}\left(D_h^{(2)} PV_{t_0}(\Theta)^{(i)}\right)\notag\\
&=\frac{1}{M}\text{Var}\left(D_h^{(2)}\mathcal{Q}(\Theta,U)\right)\notag\\
&\le\frac{1}{M}\int_{(0,1)^N} \left( D_h^{(2)} \mathcal{Q}(\Theta,u) -D_h^{(2)} PV_{t_0}(\Theta) \right)^2 \diff u\notag\\
&\le \frac{1}{M}\int_{(0,1)^N} \left(\left|D_h^{(2)} \mathcal{Q}(\Theta,u)\right| + \left|D_h^{(2)} PV_{t_0}(\Theta) \right|\right)^2 \diff u.\label{integrand}
\end{align}
For the left-hand side of the integrand, we obtain
\begin{align*}
D_h^{(2)} \mathcal{Q}(\Theta,u) &= \frac{\mathcal{Q}(\Theta+h,u)-2\mathcal{Q}(\Theta,u)+\mathcal{Q}(\Theta-h,u)}{h^2}\\
%&=\frac{\frac{Q(s_0+h,u)-2Q(s_0,u)+Q(s_0-h,u)}{h}}{h}\\
%&=\frac{D_h^{\text{Forward}}Q(s_0,u)-D_h^{\text{backward}}Q(s_0,u)}{h}\\
%&=\frac{\frac{Q(s_0+h,u)-Q(s_0,u)}{h}-\frac{Q(s_0,u)-Q(s_0-h,u)}{h}}{h}\\
%&=\frac{\frac{1}{h}\int_{s_0}^{s_0+h}Q'(\xi)\diff \xi-\frac{1}{h}\int_{s_0-h}^{s_0}Q'(\xi)\diff \xi}{h}\\
&=\frac{\frac{1}{h}\int_{0}^{1}\mathcal{Q}'(\Theta+ht,u)h\diff t-\frac{1}{h}\int_{0}^{1}\mathcal{Q}'(\Theta-ht,u)h\diff t}{h}\\
%&=\frac{\int_{0}^{1}Q'(s_0+ht)\diff t-\int_{0}^{1}Q'(s_0-ht)\diff t}{h}\\
%&=\frac{\int_{0}^{1}Q'(s_0+ht)-Q'(s_0-ht)\diff t}{h}\\
&\le \frac{\int_{0}^{1}\left| \mathcal{Q}'(\Theta+ht,u)-\mathcal{Q}'(\Theta-ht,u)\right|\diff t}{h} \le \frac{L\left| 2h\right|}{h}\le C.
\end{align*}
Analogue relations hold for the right-hand side of the integrand in \eqref{integrand}. Together, we obtain
\begin{align*}
\frac{1}{M}\int_{(0,1)^N} \left( \left| D_h^{(2)} \mathcal{Q}(\Theta,u)\right| + \left|D_h^{(2)} PV_{t_0}(\Theta) \right|\right)^2 \diff u \le \frac{1}{M} C,
\end{align*}
which completes the proof.
\end{proof}

\section{Multilevel one-step survival Monte Carlo method}
This section will present a multilevel algorithm for the one-step survival Brownian bridge Monte Carlo estimator. We will briefly explain the issues arising from the multilevel approach and how to overcome them. For a more in-depth introduction to the multilevel approach, we refer to \cite{giles2008multilevel}.
For the complexity theorem of Giles \cite{giles2008improved}, one wishes most of all to have \textit{level estimators} with variance $V[\hat{Y}_l]\le cM^{-1}_lh_l^\beta$ of order $\beta>1$, for a positive constant $c$, $M_l$ simulations on level $l=0,\dots,L$.
From Giles \cite{giles2008improved}, we know that standard implementation of the Brownian bridge approach and the use of the Milstein scheme numerically leads to $\beta \approx 0.5$. Furthermore, Giles introduces a path modification leading to numerically $\beta\approx 1.5$. However, for the one-step survival Brownian bridge approximation, we have to overcome the issue of not using this path modification technique since the used midterm interpolation would lead to biased one-step survival probabilities. Nevertheless, we found a way to modify the one-step survival Brownian bridge coarse path simulation so that it numerically achieved $\beta\approx 1.5$. The algorithm of the procedure can be found in \cref{test3}.

\begin{algorithm}\label{algorithm3}
\caption{Multilevel one-step survival Brownian bridge Monte Carlo coarse path computation}\label{test3}
\begin{algorithmic}[1]
\STATE Initialize random seed
\FOR {$n=0:N-1$}

\STATE $\widetilde{p}_{n}=\Phi\left(\frac{\frac{1}{2}\nu_n\sqrt{\frac{\sigma_n^2 h - 2 \nu_n  (\widetilde{S}_n+\mu h - \frac{1}{2}\nu_n - B )}{(\frac{1}{2}\nu_n)^2}}-\sigma_n \sqrt{h}}{\nu_n}\right)$
\STATE Sample $u_1 \sim U(0,1)$
\STATE $\widetilde{S}_{n+\frac{1}{2}}=\widetilde{S}_n+\mu  h + \sigma_n \sqrt{h} \Phi^{-1}(p_{n} u_1)+\nu_n \left(\left(  \Phi^{-1}(p_{n} u_1)\right)^2-1\right)$
\STATE $\widetilde{p}_{n+\frac{1}{2}}=\Phi\left(\frac{-\sigma_n \sqrt{h}-\nu_n \Phi^{-1}(u_1) + \sqrt{\left(\sigma_n  \sqrt{h}+\nu_n\right)^2-\nu_n \left(\widetilde{S}_{n+\frac{1}{2}}+\mu_n  h -B +\frac{1}{2}\nu_n\right)}}{\nu_n}\right)$
\STATE Sample $u_2 \sim U(0,1)$
\STATE $\widetilde{S}_{n+1}=\widetilde{S}_{n+\frac{1}{2}}+ \mu_n   h + \sigma_n  \sqrt{ h}\Phi^{-1}( u_2)$
\STATE $ \hspace{8mm}+\frac{1}{2} \nu_n \left( 2\Phi^{-1}(u_1)\Phi^{-1}(\widetilde{p}_{n+\frac{1}{2}} u_2)+(\Phi^{-1}(\widetilde{p}_{n+\frac{1}{2}} u_2))^2-1\right)$

\STATE $\widetilde{p}_{n}=\widetilde{p}_{n}\cdot \widetilde{p}_{n+\frac{1}{2}}$

\STATE $\widehat{p}^*_n=\left(1-\exp \left( \frac{-2(B-\widetilde{S}_{n+\frac{1}{2}})(B-\widetilde{S}_{n+1})}{\sigma(\widetilde{S}_{n},t_n)^2 h}  \right)\right) \left(1-\exp \left( \frac{-2(B-\widetilde{S}_n)(B-\widetilde{S}_{n+\frac{1}{2}})}{\sigma(\widetilde{S}_{n},t_n)^2 h}  \right)\right)$  
\ENDFOR
\STATE \textbf{return} $\widetilde{S}, \widetilde{p}, \widehat{p}^*$
\end{algorithmic}
\end{algorithm}

In the following we will explain \cref{test3}: First of all, the algorithm only  considers the (more complex) case $\sigma_n'(\widetilde{S}_n,t_n)\neq 0$. Furthermore, for an easier readability, we consider $\widetilde{p}^{-}_n=0$. The extension to general $\sigma_n'(\widetilde{S}_n,t_n)$ and $\widetilde{p}^{-}_n$ is straightforward as described in \cref{test2}. 
We use a simplified notation, i.e. we use $\sigma_n:=\sigma(\widetilde{S}_n,t_n)$, $\mu_n:=\mu(\widetilde{S}_n,t_n)$, $\sigma_n':=\sigma_n'(\widetilde{S}_n,t_n)$ and $\nu_n=\sigma_n\sigma_n'h$. 
Instead of one coarse step, using the one-step survival Brownian bridge discretization, with step-width $2h$, the algorithm computes two steps with step-width $h$. However, it slightly differs from the fine path simulation: We reuse the random sample $u_1$ used for the first step (line 5) to simulate the second step (line 9). Furthermore, we do not use $\sigma_{n+\frac{1}{2}}$ or  $\mu_{n+\frac{1}{2}}$ at any time. That means, even though two discretization steps are used, the discretization error of the coarse step-width $2h$ prevails. Finally, in line 11, the Brownian bridge probability is applied for both $S_{n+\frac{1}{2}}$ and $S_{n+1}$.

For a better understanding of this modification, we will give a short derivation of the coarse modification. Starting with the unmodified Milstein scheme on the coarse path using, e.g., step-width $2h$, we have:
\begin{align*}
\widehat{S}_{n+1}&=\widehat{S}_n+\mu_n  2 h + \sigma_n  \sqrt{2 h} Z_{n+1}+ \frac{1}{2}\nu_n  \left(\left( \sqrt{2 } Z_{n+1}\right)^2-2 \right).
\end{align*}
Now, using the relation $Z_{n+1}=\frac{Z_n+Z_{n+\frac{1}{2}}}{\sqrt{2}}$, we obtain
\begin{align*}
%&=\widehat{S}_n+\mu_n 2 h +\sigma_n \sqrt{2 h} Z_2+ \nu_n  \left(\left( \sqrt{2 } Z_2\right)^2-2 \right)\\
\widehat{S}_{n+1}&=%\widehat{S}_n+\mu_n 2 h +\sigma_n \sqrt{2 h} \frac{Z_n+Z_{n+\frac{1}{2}}}{\sqrt{2}}+ \nu_n  \left(\left(  \frac{Z_n+Z_{n+\frac{1}{2}}}{\sqrt{2}}\right)^2-1 \right)\notag\\
%&=\widehat{S}_n+\mu 2 h +\sigma \sqrt{2 h} \frac{Z_n+Z_{n+\frac{1}{2}}}{\sqrt{2}}+ \frac{1}{2}\sigma \sigma'  \left(h\left(  Z_n^2+2Z_nZ_{n+\frac{1}{2}}+Z_{n+\frac{1}{2}}^2\right)-2 h\right)\\
%&=\widehat{S}_n+\mu 2 h +\sigma \sqrt{h} (Z_n+Z_{n+\frac{1}{2}})+\frac{1}{2} \sigma \sigma'  \left(h\left(  Z_n^2+2Z_nZ_{n+\frac{1}{2}}+Z_{n+\frac{1}{2}}^2\right)-2 h\right)\notag\\
\widehat{S}_n+\mu_n  h+\sigma_n \sqrt{ h} Z_n+\frac{1}{2}\nu_n \left( \left(Z_n\right)^2-1\right)\notag\\
& \qquad \,\,\,+ \mu_n  h +\sigma_n \sqrt{ h} Z_{n+\frac{1}{2}}+\frac{1}{2} \nu_n \left( 2Z_nZ_{n+\frac{1}{2}}+Z_{n+\frac{1}{2}}^2-1\right).\label{help1}
\end{align*}
Now, we denote the first part of the above expression to be $S_{n+\frac{1}{2}}$, leading to
\begin{align*}
\widehat{S}_{n+1}&%=S_{n+\frac{1}{2}}+ \mu_n  h +\sigma_n \sqrt{ h} Z_{n+\frac{1}{2}}+\frac{1}{2} \nu_n \left( \left(2Z_nZ_{n+\frac{1}{2}}+Z_{n+\frac{1}{2}}^2\right)-1\right)\\
=S_{n+\frac{1}{2}}+ \mu_n  h +\sigma_n \sqrt{ h} Z_{n+\frac{1}{2}}+\frac{1}{2} \nu_n \left( 2Z_nZ_{n+\frac{1}{2}}+(Z_{n+\frac{1}{2}})^2-1\right).
\end{align*}
Here, the differences between the two fine discretization steps become quite clear: We reuse the first step's random variable, and the drift and volatility are evaluated at $n$. Applying the Brownian bridge crossing probability to $S_{n+\frac{1}{2}}$ with $Z_n$ and to $S_{n+1}$ with $Z_{n+\frac{1}{2}}$ while assuming that $Z_n$ is a constant, leads to \cref{test3}.

\section{Numerical Results}

This section will provide some numerical results for the one-step survival Brownian bridge estimator and its derivatives. Therefore, we consider a simple, continuously observed, up-and-out barrier option. We will use parameters, as presented in \cref{parameter}, whereby the example is fictitious.

\begin{table}
\centering
  \begin{tabular}{lr}
  \hline
 Parameter & Value\\
 \hline
 $t_0$ & 0 \\
 $T$ & 1  \\
 $S_0$ & $1$  \\
 $B$ & $1.1$ \\
 $r$ & 5 \%\\
 $b$ & 0 \%\\
 $\sigma$ & $20$ \%\\
 $K$ & $1$ \\
 \hline
 \end{tabular}
 \caption{Parameters used for the simulation of the up-and-out barrier option.}
  \label{parameter}
\end{table}

In the first column of \cref{fig:vergleich0}, we see the estimated mean squared error of the options' present value with respect to the Monte Carlo samples. In the second column, we see the estimated absolute error with respect to the calculation time. The Brownian bridge estimator results and the one-step survival Brownian bridge estimator are plotted in a blue line and a red line. We observe the proven variance reduction, which depends on \eqref{varianzmeaning}, as mentioned above. However, the computation time for a specific error $\epsilon$ is similar since, for the one-step survival Brownian bridge approach, more terms have to be evaluated. Nevertheless, one could observe better results for initial values nearer to the barrier.

\begin{figure}
\centering
\includegraphics[scale=.5]{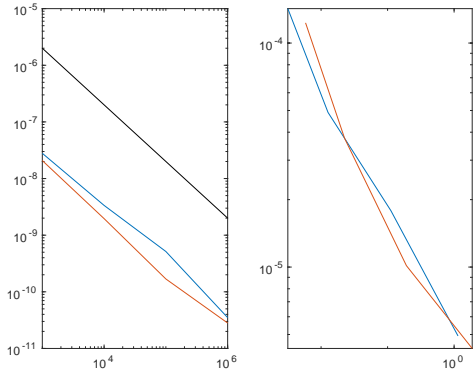}
\vspace{-7mm}
\caption{The figures show the mean squared error of the present value calculation of the one-step survival Brownian bridge estimator (red line) and the Brownian bridge estimator (blue line), depending on the amount of Monte Carlo simulations on the left side and calculation time (CPU) on the right.}
\label{fig:vergleich0}
\end{figure}

Next, we want to take a more in-depth look at comparing the two estimators for the sensitivity calculation. Therefore, we analogously compare the mean squared error and the calculation time in \cref{fig:vergleich1}, but on this occasion, the options' Delta is calculated through the pathwise sensitivity approach. The results are again plotted in a blue line for the Brownian bridge estimator and in a red line for the one-step survival Brownian bridge estimator.

Here, we see the strength and advantage of the discussed properties of the first derivative of the one-step survival Brownian bridge estimator, leading to a significant variance reduction and time savings.

\begin{figure}
\centering
\includegraphics[scale=.45]{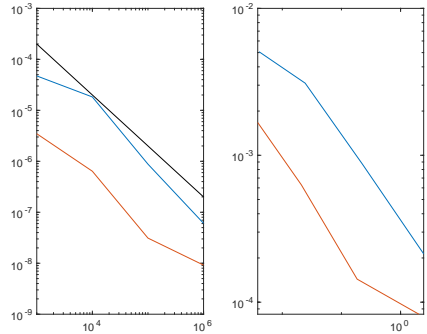}
\vspace{-7mm}
\caption{The figures show the absolute error of the Delta calculation of the one-step survival Brownian bridge estimator (red line) and the Brownian bridge estimator (blue line), depending on the amount of Monte Carlo simulation on the left side and calculation time (CPU) on the right.}
\label{fig:vergleich1}
\end{figure}

Now, we study the stability of the second-order Greeks. \cref{fig:vergleich} shows the second derivative of the barrier options present value with respect to the underlying price (the Gamma) calculated by applying second-order finite differences as in \cref{thm:fin}, to both the Brownian bridge estimator and the one-step survival Brownian bridge estimator plotted in a blue line and a red line, respectively. The plot demonstrates the Brownian bridge estimator's instability with respect to second-order numerical differentiation and the stability of the Brownian bridge one-step survival estimator.

\begin{figure}
\centering
\includegraphics[scale=.45]{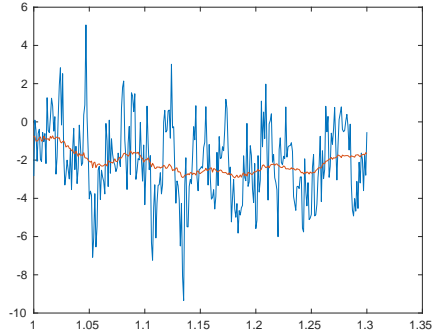}
\vspace{-7mm}
\caption{This figure shows the second-order Greek (Gamma) of the one-step survival Brownian bridge estimator (red line) and the Brownian bridge estimator (blue line), depending on the initial values of $S_0$. Gamma was calculated through second-order finite differences with step width $h=10^{-3}$ of $S_0$ and $M=10^5$ Monte Carlo simulations.}
\label{fig:vergleich}
\end{figure}

Lastly, we study the properties of the introduced multilevel modification. \cref{fig:beta} shows the behavior of both $\widetilde{P}_l$ and $\widetilde{P}_l-\widetilde{P}_{l-1}$, with the logarithmic base $2$ as quantity versus the grid level. The slope of the line for $\widetilde{P}_l-\widetilde{P}_{l-1}$ is approximately $1.5$, indicating the wished beta.

\begin{figure}
\centering
\includegraphics[scale=.45]{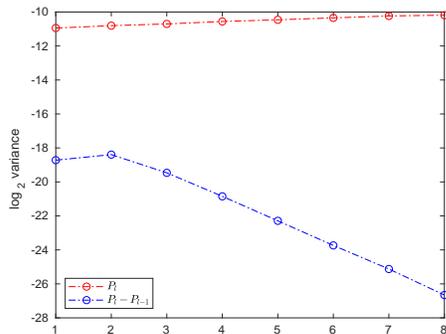}
\vspace{-7mm}
\caption{The plot shows the behaviour of the variance of both $\hat{P}_l$ (red line) and $\hat{P}_l-\hat{P}_{l-1}$ (blue line).}
\label{fig:beta}
\end{figure}

\section{Conclusion}

We faced the problem that the Brownian bridge approximation leads to a non-Lipschitz-continuous first derivative. As seen in the numerical results, this leads to the Brownian bridge estimator's instability for the second-order Greek computation.
We adapted the one-step survival Monte Carlo method to the Milstein scheme and the Brownian bridge approach to overcome this issue. The adaption resulted in a new one-step survival Brownian bridge approximation with a modified Milstein scheme and the slightly modified/smoothed crossing probabilities of the Brownian bridge interpolation. We showed that this new approach is unbiased with respect to the Brownian bridge approach and leads to variance reduction.
 
Furthermore, we presented the one-step survival Brownian bridge partial derivatives and showed unbiasedness. In a numerical experiment, we saw huge variance reductions for first-order Greeks. We theoretically showed that finite differences could stably differentiate it. Furthermore, we presented a numerical example showing this property.

We only presented first-order pathwise sensitivity results to simplify the presentation, but one could straightforwardly extend these results to the new approximation's second-order pathwise sensitivities.

Keeping the computational complexity in mind, we also provided a multilevel Monte Carlo algorithm and demonstrated its computational efficiency.

Even if it is restricted to up-and-out barrier call options, the conversion to put or down-and-out options is straightforward. Furthermore, we expect that the algorithms could be extended to the multivariate case, similar to the ideas of \cite{giles2013numerical} and \cite{alm}.

In the appendix, we showed weak convergence of almost one and a variance bound for the Brownian bridge approach. The proven (weak) convergence requires certain assumptions on the stochastic differential equations using the Milstein scheme. 

Finally, it should be mentioned that the new approach can be combined with other variance reduction methods as well, such as antithetic sampling or control variates \cite{staum}.

\bibliographystyle{siamplain}
\bibliography{references}

\newpage

\appendix
\section{Convergence and variance bound for the Brownian bridge and the one-step survival Brownian bridge approximation}

This section provides some theoretical background and proves a convergence property for the Brownian bridge approximation.
It is well known, see, e.g., theorem 4.5.3 of \cite{kloeden2012numerical}, that, under certain conditions, an SDE of the form \eqref{sde} has a pathwise unique, strong solution $S(t)$ on $[0,T]$.
Allowing an application of the Milstein scheme, we assume $\mu \in \mathcal{C}^{1,1}(\mathbb{R}\times \mathbb{R}^+)$ and $\sigma \in \mathcal{C}^{2,1}(\mathbb{R}\times \mathbb{R}^+)$. Furthermore, consider the following assumptions for all $x,y,t,s$ and with $L_0\equiv \partial /\partial t + \mu \partial / \partial x$ and $L_1 \equiv \partial / \partial x$:
\begin{itemize}
\item A1\label{a1} (uniform Lipschitz-condition): There exists a constant $K_1>0$ such that
\begin{align*}
| \mu(x,t)-\mu(y,t)| + | \sigma(x,t)-\sigma(y,t)| + |L_1\sigma(x,t)-L_1\sigma(y,t)| \le K_1 |x-y|.
\end{align*}
\item A2 (linear growth bound): There exists a constant $K_2>0$ such that
\begin{align*}
|\mu(x,t)|+|L_0 \mu(x,t)|+|L_1\mu(x,t)|+|\sigma(x,t)|+|L_0\sigma(x,t)|\\
+|L_1\sigma(x,t)| + | L_0L_1 \sigma (x,t)|+|L_1L_1\sigma(x,t)| \le K_2 (1+|x|).
\end{align*}
\item A3 (additional Lipschitz-condition): There exists $K_3>0$ such that
\begin{align*}
|\sigma(x,t)-\sigma(x,s)| \le K_3 (1+|x|) \sqrt{|t-s|}.
\end{align*}
\item A4: $\sigma_{\min}\equiv \inf_{[0,T]}|\sigma(B,t)|>0$.
\item A5: $\inf_{[0,T]}S(t)$ has a bounded density in the neighbourhood of $B$.
\end{itemize}

\begin{theorem}\label{thmconvergence}
Provided that assumptions A1 to A5 are satisfied and the Milstein scheme is used, as described in \eqref{milstein}, the Brownian bridge approximation \eqref{BBint} satisfies
\begin{align*}
\mathbb{E}[P-\widehat{P}]  &< c h^{1-\delta},\\
%\mathbb{E}[(P-\widehat{P})^2]  &\leq C_2 h^{3/2-\delta},\label{convergencequad}\\
\mbox{Var}[\widehat{P}]&<\infty,
\end{align*}
for any positive $c$ and $\delta$.
\end{theorem}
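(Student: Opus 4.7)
The plan is to reduce the weak error $|\mathbb{E}[P-\widehat{P}]|$ to two tractable pieces by introducing a continuous-time auxiliary process that realises the Brownian bridge interpretation of $\widehat{p}_n$. Define $Y(t)$ to agree with $\widehat{S}_n$ at the grid points and, on each subinterval $[t_n,t_{n+1}]$, to be a Brownian bridge with constant diffusion coefficient $\sigma(\widehat{S}_n,t_n)$; the classical reflection principle identifies $\widehat{p}_n$ with $\mathbb{P}(\max_{[t_n,t_{n+1}]}Y \geq B \mid \widehat{S}_n,\widehat{S}_{n+1})$, so that
\begin{align*}
\mathbb{E}[\widehat{P}] = \mathbb{E}\bigl[q(\widehat{S}_N)\,\mathbf{1}_{\{\max_{[0,T]}Y(t)<B\}}\bigr].
\end{align*}
After coupling $Y$ and $S$ through the same driving Brownian motion, I would split
\begin{align*}
\mathbb{E}[P-\widehat{P}] = \mathbb{E}\bigl[(q(S(T))-q(\widehat{S}_N))\mathbf{1}_{\{\max S<B\}}\bigr] + \mathbb{E}\bigl[q(\widehat{S}_N)(\mathbf{1}_{\{\max S<B\}} - \mathbf{1}_{\{\max Y<B\}})\bigr].
\end{align*}
Under A1--A3 the Milstein scheme has strong order one, so the first summand is $O(h)$ by Cauchy--Schwarz combined with the Lipschitz property of $q$ and the polynomial $L^p$-moments of $S(T)$ and $\widehat{S}_N$ granted by A2.

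The core work lies in the second summand, whose modulus is bounded above by $\|q(\widehat{S}_N)\|_{L^2}$ times the square root of the symmetric barrier-crossing discrepancy $\mathbb{P}(\max S < B, \max Y \geq B) + \mathbb{P}(\max S \geq B, \max Y < B)$. To control this I would adopt a boundary-layer decomposition: fix $\epsilon = h^{1/2-\delta'}$ and split according to whether $\min_{[0,T]}(B-S(t)) \leq \epsilon$ or not. Assumption A5 bounds the probability of the first event by $O(\epsilon)$, while on the complementary event a sign swap of $B-S$ versus $B-Y$ would force $\sup_t|S(t)-Y(t)| \geq \epsilon$, an event whose probability is handled by a Markov inequality once a strong $L^p$-bound on $\sup_t|S(t)-Y(t)|$ is in hand. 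That bound combines the classical Milstein estimate $\|\sup_n|S(t_n)-\widehat{S}_n|\|_{L^p} = O(h)$ with a uniform Brownian-bridge fluctuation bound on each subinterval, in which A4 ensures the bridge variance stays away from zero near the barrier. Optimising $p$ and $\delta'$ against $\delta$ yields the overall rate $h^{1-\delta}$ for every $\delta>0$.

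The variance bound follows immediately: since $(1-\widehat{p}_n) \in [0,1]$ for every $n$, one has $\widehat{P}^2 \leq q(\widehat{S}_N)^2$, hence $\mathrm{Var}[\widehat{P}] \leq \mathbb{E}[q(\widehat{S}_N)^2]$, which is finite by the linear growth of $q$ together with the moment bounds for the Milstein scheme under A2. The principal obstacle I anticipate is the boundary-layer estimate itself: obtaining a sufficiently sharp strong bound on $\sup_t|S(t)-Y(t)|$ for $Y$ built from piecewise frozen-coefficient bridges requires interleaving strong-Milstein convergence with fine control of bridge fluctuations, and A5 must be applied exactly at scale $\epsilon$ in order to absorb the arbitrarily small polynomial loss encoded in $\delta$.
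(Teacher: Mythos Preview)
Your variance argument is correct and matches the paper's (it bounds $\widehat P^2\le q(\widehat S_N)^2$ and invokes Milstein moment bounds). Your strategy for the weak error --- realising $\prod_n(1-\widehat p_n)$ as $\mathbb P(\max Y<B\mid \widehat S_0,\dots,\widehat S_N)$ for a frozen-coefficient bridge interpolant and then running a boundary-layer argument --- is sound and is in fact more direct than what the paper does: the paper first proves only the rate $h^{1/2-\delta}$ by a three-way split (extreme paths / $|S_{\max}-B|>h^{1/2-4\gamma}$ / the rest), and then upgrades to $h^{1-\delta}$ by a separate telescoping argument bounding $\mathbb E[\widehat P_l-\widehat P_{l-1}]$ across dyadic levels, borrowing the fine case analysis from Giles--Debrabant--R\"ossler.

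However, two quantitative choices in your proposal prevent it from reaching $h^{1-\delta}$. First, applying Cauchy--Schwarz to the indicator term yields $\|q(\widehat S_N)\|_{L^2}\cdot\sqrt{\mathbb P(A\triangle B)}$, and since the boundary-layer probability contributes $O(\epsilon)$ to $\mathbb P(A\triangle B)$, the square root caps you at $O(\sqrt\epsilon)$. Second, with $\epsilon=h^{1/2-\delta'}$ this gives at best $O(h^{1/4})$; even if you drop Cauchy--Schwarz and use that $q(\widehat S_N)$ is bounded on the discrepancy event (one of $\max S<B$ or $\max Y<B$ holds there, so $\widehat S_N<B+o(1)$ on non-extreme paths), the boundary layer still limits you to $O(h^{1/2-\delta'})$. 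The remedy is to exploit the full strength of the Milstein bridge interpolant: $\sup_t|S(t)-Y(t)|>h^{1-\gamma}$ has probability $o(h^p)$ for every $p$ (this is exactly Lemma~A.7 in the paper, taken from \cite{giles2013numerical}). Hence you may take $\epsilon=h^{1-\gamma}$, making the boundary-layer probability $O(h^{1-\gamma})$ by A5 while the complementary discrepancy becomes an extreme event. Combined with the boundedness of $q(\widehat S_N)$ on the discrepancy set, this delivers $h^{1-\delta}$ directly, without the paper's Cauchy-series detour. (Minor point: your split $\{\min_t(B-S(t))\le\epsilon\}$ is one-sided and has probability $\Theta(1)$ whenever $\mathbb P(S_{\max}\ge B)>0$; you need the two-sided strip $\{|S_{\max}-B|\le\epsilon\}$.)
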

\begin{proof}
See \cref{proofsec}.
\end{proof}

\begin{corollary}
The Brownian bridge Monte Carlo estimator for the present value of an up-and-out barrier option, using the Milstein scheme and given by \cref{mc1}, satisfies $\mathbb{E}[P-\overline{P_{M}}]  \leq c h^{1-\delta}$, for any positive $c$ and $\delta$. Furthermore, it satisfies $\mbox{Var}[\overline{P_{M}}]\le C/M$, for a positive constant $C$.
\end{corollary}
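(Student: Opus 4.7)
The plan is to observe that $\overline{P_M}$ is nothing more than the empirical mean of $M$ independent, identically distributed copies of the single-path approximation $\widehat{P}$, so both claims are immediate reductions to the corresponding statements for $\widehat{P}$ already established in \Cref{thmconvergence}. Concretely, writing $\widehat{P}_m := q(\widehat{s}_{N,m})\prod_{n=0}^{N-1}(1-\widehat{p}_{n,m})$ for the $m$-th sampled payoff, the $\widehat{P}_m$ are i.i.d. with the same law as $\widehat{P}$.

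For the bias statement, I would use linearity of expectation to write $\mathbb{E}[\overline{P_M}] = \frac{1}{M}\sum_{m=1}^{M} \mathbb{E}[\widehat{P}_m] = \mathbb{E}[\widehat{P}]$, so $\mathbb{E}[P - \overline{P_M}] = \mathbb{E}[P - \widehat{P}]$, and then invoke the first inequality of \Cref{thmconvergence} to conclude $\mathbb{E}[P - \overline{P_M}] \le c h^{1-\delta}$ for any $\delta > 0$ (with $c$ depending on $\delta$, matching the statement).

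For the variance bound, I would exploit independence of the samples: $\mathrm{Var}[\overline{P_M}] = \frac{1}{M^2}\sum_{m=1}^{M}\mathrm{Var}[\widehat{P}_m] = \frac{1}{M}\mathrm{Var}[\widehat{P}]$. Setting $C := \mathrm{Var}[\widehat{P}]$, which is finite by the second assertion of \Cref{thmconvergence}, gives $\mathrm{Var}[\overline{P_M}] \le C/M$ as required.

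There is no real obstacle here: the corollary is a routine consequence of i.i.d. sampling once the single-path statements in \Cref{thmconvergence} are in hand. All substantive difficulty (the weak convergence rate under A1--A5 and the uniform variance bound for $\widehat{P}$) is already absorbed into \Cref{thmconvergence}, whose proof is deferred to \Cref{proofsec}.
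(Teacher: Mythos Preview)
Your proposal is correct and mirrors the paper's intent: the corollary is stated without proof precisely because it follows immediately from \Cref{thmconvergence} via the i.i.d.\ averaging argument you give (linearity of expectation for the bias and the $1/M$ scaling of variance for independent samples). The only cosmetic point is that $C$ should be read as a uniform-in-$h$ bound on $\mathrm{Var}[\widehat{P}]$, which is what the proof of \Cref{thmconvergence} actually delivers via the $\limsup$ argument in \Cref{thm:oss}.
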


\subsection{proof of \cref{thmconvergence}}\label{proofsec}
In all the cases, the paths are discretized by using the Milstein scheme. As explained above, the Brownian bridge approximation \eqref{BBint} samples if the maximum exceeds the barrier between two discretization steps. From, e.g., section 6.4. of \cite{glasserman}, we know that the crossing probability \eqref{brownianp} would be accurate if facing an SDE with constant drift and volatility. However, we have to study the discretisation error's impact on these probabilities for general SDE's. For the proof, we first introduce some known results and present some lemmata.

\begin{lemma}\label{thm:originalS}
Provided assumptions A1 to A3 are satisfied, then for all positive integers $m$ there exists a constant $C_m$ such that
\begin{align*}
\mathbb{E}\left[ \sup\limits_{0\leq t\leq T} |S(t)|^m \right] < C_m.
\end{align*}
\end{lemma}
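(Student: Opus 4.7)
The plan is to use the classical moment-bound technique for It\^o SDEs: write $S(t)$ in integral form, raise to the $m$-th power, take the supremum over $[0,T]$ and expectation, dominate the stochastic integral via the Burkholder--Davis--Gundy inequality, apply the linear growth assumption A2 to the drift and volatility, and close the estimate with Gronwall's lemma. Since I do not yet know that $\mathbb{E}[\sup_{t\le T} |S(t)|^m]$ is finite, I first need a localization step; this is the only real nuisance in the argument.

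More concretely, I would fix $m\ge 2$ (odd $m$ follow from H\"older), define the stopping times $\tau_n := \inf\{t\in[0,T] : |S(t)|\ge n\}\wedge T$, and work with $S^n(t) := S(t\wedge \tau_n)$, whose sup-moments are trivially finite by construction. Using the elementary inequality $|a+b+c|^m \le 3^{m-1}(|a|^m+|b|^m+|c|^m)$ applied to the representation
\begin{align*}
S^n(t) = S_0 + \int_0^{t\wedge \tau_n} \mu(S(u),u)\,\diff u + \int_0^{t\wedge \tau_n} \sigma(S(u),u)\,\diff W(u),
\end{align*}
I would bound $\mathbb{E}[\sup_{s\le t}|S^n(s)|^m]$ by three contributions. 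For the drift term, H\"older in time gives $|\int_0^{s}\mu(S(u),u)\diff u|^m \le T^{m-1}\int_0^{s}|\mu(S(u),u)|^m\diff u$, and A2 turns this into $C_T \int_0^t (1+\mathbb{E}[\sup_{r\le u}|S^n(r)|^m])\diff u$. For the stochastic integral, BDG applied pathwise and then A2 yield
\begin{align*}
\mathbb{E}\Bigl[\sup_{s\le t}\Bigl|\int_0^{s\wedge \tau_n}\sigma(S(u),u)\diff W(u)\Bigr|^m\Bigr]
\le C_m\, \mathbb{E}\Bigl[\Bigl(\int_0^{t\wedge \tau_n}\sigma(S(u),u)^2 \diff u\Bigr)^{m/2}\Bigr]
\le \tilde C_m \int_0^t \bigl(1+\mathbb{E}[\sup_{r\le u}|S^n(r)|^m]\bigr)\diff u,
\end{align*}
after another application of H\"older in time. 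Collecting these estimates yields an inequality of the form
\begin{align*}
\phi_n(t) \le A + B \int_0^t \phi_n(u)\,\diff u, \qquad \phi_n(t) := \mathbb{E}\bigl[\sup_{s\le t}|S^n(s)|^m\bigr],
\end{align*}
with constants $A, B$ independent of $n$.

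Gronwall's lemma then gives $\phi_n(T) \le A e^{BT} =: C_m$, a bound uniform in $n$. Since $\tau_n \to T$ almost surely as $n\to\infty$ (pathwise continuity of $S$ rules out a blow-up before $T$ under A1--A2, e.g.\ by the standard uniqueness/global existence argument), Fatou's lemma promotes the estimate to $\mathbb{E}[\sup_{t\le T}|S(t)|^m] \le C_m$, as claimed. The main obstacle is really just the localization bookkeeping: without it one risks multiplying $\infty$ by a finite constant in the Gronwall step. The estimates themselves are routine and rely only on A1--A2, so A3 is not used for this particular bound (it will be needed later for the discretization error analysis).
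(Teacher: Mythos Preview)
Your argument is correct: this is the standard moment-bound proof via localization, BDG, linear growth, and Gronwall, and your observation that A3 is not needed here is also right. The paper, however, does not carry out any argument at all---its entire proof is a citation to Theorem~10.6.3 and Corollary~10.6.4 of Kloeden and Platen, \emph{Numerical Solution of Stochastic Differential Equations}. So your proposal is not so much a different route as a reconstruction of the classical proof that the cited reference contains; what you gain is a self-contained treatment, what the paper gains is brevity.
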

\begin{proof}
See theorem 10.6.3. and Corollary 10.6.4. in \cite{kloeden2012numerical}.
\end{proof}

\begin{definition}
The Kloeden \& Platen continuous-time interpolant for $t_n\le t \le t_{n+1}$ is defined by
\begin{align*}
\widehat{S}_{KP}(t )=
\widehat{S}_{n}+\mu(\widehat{S}_{n},{t_n})(t -t_n)+ \sigma(\widehat{S}_{n},{t_n}) (W(t)-W_{t_n})\\ + \frac{1}{2}\sigma(\widehat{S}_{n},{t_n}) \sigma'(\widehat{S}_{n},{t_n})\left( (W(t)-W_{t_n})^2-(t-t_n)\right).
\end{align*}
\end{definition}

\begin{lemma}\label{giles3.2}
Provided assumptions A1 to A3 are satisfied, then for all positive integers $m$ there exists a positive constant $C_m$ such that
\begin{align*}
\mathbb{E}\left[ \sup\limits_{0\le t \le T} | S(t)-\widehat{S}_{KP}(t) |^m\right] &<C_mh^m,\\
 \mathbb{E}\left[ \sup\limits_{0\le t \le T} | \widehat{S}_{KP}(t) |^m \right] &<C_m,
\end{align*}
with $h={t_{n+1}}-{t_n}$.
\end{lemma}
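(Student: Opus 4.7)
The plan is to deduce the first bound from the standard strong convergence analysis of the Milstein scheme and then to obtain the second bound by combining it with \cref{thm:originalS} via the triangle inequality.

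First, I would write $S(t)-\widehat{S}_{KP}(t)$ in integral form on each subinterval $[t_n,t_{n+1}]$ by inserting the It\^o--Taylor expansion of $\mu(S(r),r)$ and $\sigma(S(r),r)$ around $(\widehat{S}_n,t_n)$. The dominant stochastic contribution from $\sigma(S(r),r)-\sigma(\widehat{S}_n,t_n)$ is
\[
L_1\sigma(\widehat{S}_n,t_n)\,\sigma(\widehat{S}_n,t_n)\,(W_r-W_{t_n}),
\]
whose stochastic integral against $dW_r$ produces exactly $\tfrac{1}{2}\sigma\sigma'\bigl((W_t-W_{t_n})^2-(t-t_n)\bigr)$, which is precisely the Milstein correction in $\widehat{S}_{KP}(t)$ and therefore cancels. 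What remains are higher-order iterated integrals whose integrands involve $L_0\mu$, $L_1\mu$, $L_0\sigma$, $L_0L_1\sigma$ and $L_1L_1\sigma$, each of which is controlled pathwise through assumption A2.

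Next, I would apply the Burkholder--Davis--Gundy inequality to the stochastic integrals and H\"older's inequality to the Lebesgue integrals, using A1 to absorb differences between evaluations of $\mu,\sigma,L_1\sigma$ at $S(r)$ and at $\widehat{S}_n$ into $|S(r)-\widehat{S}_{KP}(r)|$ terms. A preliminary discrete Gr\"onwall argument on the Milstein recursion itself, combined with A2, yields $\sup_n\mathbb{E}|\widehat{S}_n|^{m'}<\infty$ for every $m'\ge 1$; together with \cref{thm:originalS} this supplies the uniform moment control needed for the coefficient evaluations. Collecting everything produces an inequality of the form
\[
\mathbb{E}\sup_{0\le r\le t}\bigl|S(r)-\widehat{S}_{KP}(r)\bigr|^m
\le C_m\int_0^{t}\mathbb{E}\sup_{0\le u\le r}\bigl|S(u)-\widehat{S}_{KP}(u)\bigr|^m\,dr + C_m'\,h^m,
\]
whereupon a continuous Gr\"onwall inequality closes the argument.

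The second bound then follows immediately from Minkowski's inequality,
\[
\bigl(\mathbb{E}\sup_t|\widehat{S}_{KP}(t)|^m\bigr)^{1/m}
\le \bigl(\mathbb{E}\sup_t|S(t)|^m\bigr)^{1/m}
+\bigl(\mathbb{E}\sup_t|S(t)-\widehat{S}_{KP}(t)|^m\bigr)^{1/m},
\]
since the first summand is controlled by \cref{thm:originalS} and the second by what was just proved (which in particular is uniformly bounded in $h$). The main obstacle will be the bookkeeping of the It\^o--Taylor remainders: one must verify that after the Milstein cancellation every leftover iterated integral yields, in $L^m$, exactly the strong local order needed so that, once the $N\sim 1/h$ local contributions are aggregated and Gr\"onwall is applied, the global error is genuinely $O(h^m)$ rather than a worse power. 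The regularity hypotheses $\mu\in\mathcal{C}^{1,1}$, $\sigma\in\mathcal{C}^{2,1}$ and the growth bounds on $L_0L_1\sigma$ and $L_1L_1\sigma$ in A2 are exactly tailored for this step.
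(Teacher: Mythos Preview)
Your sketch is a correct outline of the standard proof of strong order~$1$ convergence for the Milstein scheme with the Kloeden--Platen interpolant: the It\^o--Taylor expansion, cancellation of the $\tfrac{1}{2}\sigma\sigma'((W_t-W_{t_n})^2-(t-t_n))$ term, BDG plus Gr\"onwall on the remainders, and then the triangle inequality for the second bound are exactly the right ingredients, and the regularity and growth hypotheses A1--A3 are precisely what is needed to close the argument.

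The paper, however, does not reproduce any of this: its proof of the lemma consists entirely of the reference ``See theorem 10.6.3 and Corollary 10.6.4 in \cite{kloeden2012numerical}.'' In other words, the authors treat this as a known black-box result from Kloeden and Platen, whereas you have essentially unpacked the contents of that reference. What your approach buys is self-containment and a clear view of where each assumption is used; what the paper's approach buys is brevity, since the result is indeed classical and a citation suffices.
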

\begin{proof}
See theorem 10.6.3. and Corollary 10.6.4. in \cite{kloeden2012numerical}.
\end{proof}

\begin{lemma}\label{thm:boundeurop}
Provided assumptions A1 to A3 are satisfied, then the approximation of a European call option, given by
\begin{align*}
\widehat{P}^{\mbox{europ.}}:= q(\widehat{S}_{N}),
\end{align*}
satisfies that for all integers m there exists a positive constant $C_m$ such that
\begin{align*}
\limsup \limits_{h \downarrow 0 } \mathbb{E}\left[ |  \widehat{P}^{\mbox{europ.}} |^m\right] < C_m.
%\limsup \limits_{h \downarrow 0 }  \mathbb{E}\left[( \widehat{P}^{\mbox{europ.}} )^2\right] < C(h),
\end{align*}

\end{lemma}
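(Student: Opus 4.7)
My plan is to reduce the statement to the uniform moment bound on the Kloeden--Platen continuous-time interpolant provided by \cref{giles3.2}, exploiting the fact that the European call payoff is at most linear in the terminal underlying.

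First, I would note the elementary pointwise estimate $0 \le (\widehat{S}_N - K)^+ \le |\widehat{S}_N| + K$. A routine application of the $c_r$-inequality then yields $|\widehat{P}^{\mathrm{europ.}}|^m \le C_m'\bigl(|\widehat{S}_N|^m + 1\bigr)$ for a constant $C_m'$ depending only on $m$ and $K$. Next, I would observe that the Milstein iterate at the terminal grid point $t_N = T$ coincides with the value of the continuous-time interpolant at $T$: comparing \eqref{milstein} with the definition of $\widehat{S}_{KP}$ and using $W(t_{n+1}) - W(t_n) = \sqrt{h}\,\Delta Z_n$ shows $\widehat{S}_N = \widehat{S}_{KP}(T)$.

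From there, taking expectations and using
\begin{align*}
\mathbb{E}\bigl[|\widehat{S}_N|^m\bigr] \le \mathbb{E}\Bigl[\sup_{0\le t\le T} |\widehat{S}_{KP}(t)|^m\Bigr],
\end{align*}
the second bound in \cref{giles3.2} supplies an $h$-independent constant $C_m$ with $\mathbb{E}[\sup_{0\le t\le T}|\widehat{S}_{KP}(t)|^m] < C_m$. Chaining the two inequalities produces $\mathbb{E}[|\widehat{P}^{\mathrm{europ.}}|^m] \le C_m'(C_m + 1)$, a bound that is uniform in the step width $h$. Passing to the $\limsup$ as $h \downarrow 0$ gives the claim with a new constant depending only on $m$, $K$, and the constants furnished by assumptions A1--A3.

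There is essentially no obstacle in this argument: the real analytic work is already contained in \cref{giles3.2}, and all that is being added is the trivial growth estimate on the call payoff together with the identification of the Milstein iterate with the continuous-time interpolant at grid points.
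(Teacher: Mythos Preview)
Your proposal is correct and follows essentially the same approach as the paper: both arguments bound the call payoff linearly in $|\widehat{S}_N|$, identify $\widehat{S}_N$ with $\widehat{S}_{KP}(T)$, and invoke the uniform moment bound of \cref{giles3.2}. Your version is in fact slightly cleaner, since by using the $h$-uniform bound from \cref{giles3.2} directly you avoid the interchange of $\limsup$ and expectation that the paper's proof performs.
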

\begin{proof}
We have that $\widehat{P}^{\mbox{europ.}}$ is Lipschitz-continuous with $L>0$. By using \cref{giles3.2}, we obtain
\begin{align*}
\limsup \limits_{h \downarrow 0 } \mathbb{E}\left[ |  \widehat{P}^{\mbox{europ.}} |^m\right] &\le \limsup \limits_{h \downarrow 0 } L^m  \mathbb{E}\left[ |\widehat{S}_{N} |^m\right]\\
& \le  L^m  \mathbb{E}\left[ \limsup \limits_{h \downarrow 0 }|\widehat{S}_{N} |^m\right]\\
&\le L^m \mathbb{E}\left[ \lim \limits_{h \downarrow 0 }\left(\sup\limits_{0\le t \le T} | \widehat{S}_{KP}(T) |^m\right) \right]< L^m C_m
%& =L^m \mathbb{E}\left[ \sup\limits_{0\le t \le T} | \widehat{S}_{KP}(T) |^m \right] < C_m.
%&= \limsup \limits_{h \downarrow 0 } L  \mathbb{E}\left[ \left(\sum\limits_{n=0}^{N-1} \widehat{S}_{n+1}-\widehat{S}_n\right) + S_0\right]
% &=\limsup \limits_{h \downarrow 0 } L  \left[ \left(\sum\limits_{n=0}^{N-1} \mathbb{E}\left[\widehat{S}_{n+1}-\widehat{S}_n\right]\right) + S_0\right],
\end{align*}
Since on $t_n$ and $t_{n+1}$ the Milstein scheme's discretisation steps equal the Kloeden \& Platen interpolant, the inequality holds.
\end{proof}

%For the following result, see, e.g., . 
%\begin{lemma}
%Consider an SDE of the form \eqref{sde} with constant drift and volatility, then the probability that the maximum of $\widehat{S}(t)$ over the interval $[t_n,t_{n+1}]$ is greater than some value $B$, is
%\begin{align*}
%\mathbb{P}\left(\sup\limits_{[t_n,t_{n+1}]} \widehat{S}(t) >B | \widehat{S}_n,\widehat{S}_{n+1}\right) =\exp \left( \frac{-2(B-\widehat{S}_n)^+(B-\widehat{S}_{n+1})^+}{\sigma(\widehat{S}_{n},t_n)^2 h}  \right).
%\end{align*}
%\end{lemma}

\begin{lemma}\label{thm:oss}
Provided assumptions A1 to A3 are satisfied, then the Brownian bridge approximation \eqref{BBint} satisfies that for all integers m there exists a positive constant $C_m$ such that
\begin{align}
\limsup \limits_{h \downarrow 0 } \mathbb{E}\left[ |\widehat{P}|^m \right] < C_m.
%\limsup \limits_{h \downarrow 0 } \mathbb{E}\left[ \widehat{P} ^2\right] < \infty.\label{bbvar}
\end{align}
\end{lemma}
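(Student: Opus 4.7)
The plan is to bound $|\widehat{P}|^m$ pointwise by $|\widehat{P}^{\mathrm{europ.}}|^m$ and then invoke \cref{thm:boundeurop}. The key observation is structural: the crossing probability $\widehat{p}_n$ defined in \eqref{brownianp} takes values in $[0,1]$, since it is the exponential of a non-positive quantity. Consequently each factor $(1-\widehat{p}_n)$ also lies in $[0,1]$, and therefore the product
\begin{align*}
\prod_{n=0}^{N-1} \left(1-\widehat{p}_n\right) \in [0,1]
\end{align*}
independently of $N$ (and hence of $h$).

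With this bound in hand, I would argue
\begin{align*}
|\widehat{P}|^m = |q(\widehat{S}_N)|^m \left(\prod_{n=0}^{N-1}(1-\widehat{p}_n)\right)^m \le |q(\widehat{S}_N)|^m = |\widehat{P}^{\mathrm{europ.}}|^m,
\end{align*}
pointwise on every sample path. Taking expectations and then the $\limsup$ as $h \downarrow 0$, and applying \cref{thm:boundeurop}, one obtains
\begin{align*}
\limsup_{h \downarrow 0} \mathbb{E}\left[|\widehat{P}|^m\right] \le \limsup_{h \downarrow 0} \mathbb{E}\left[|\widehat{P}^{\mathrm{europ.}}|^m\right] < C_m,
\end{align*}
which is the required estimate.

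There is really no substantive obstacle here; the statement is a direct corollary of the previous lemma together with the boundedness of the survival factor. The only item worth double-checking is that $\widehat{p}_n \ge 0$ and $\widehat{p}_n \le 1$ on every realization of the discretized path, which follows immediately from the $(\,\cdot\,)^+$ terms in the numerator and the positivity of $\sigma(\widehat{S}_n,t_n)^2 h$ in the denominator of \eqref{brownianp} (using positive volatility as assumed throughout). No moment assumption on the survival factors is required, precisely because they are trivially bounded by one.
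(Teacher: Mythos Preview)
Your argument is correct and mirrors the paper's own proof: both use $\widehat{p}_n\in[0,1]$ to bound the survival product by $1$ and then invoke \cref{thm:boundeurop}. Your presentation is in fact slightly cleaner, making the pointwise bound explicit before taking expectations, but the substance is identical.
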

\begin{proof}
Since $\widehat{p}_n \in [0,1]$, we have $\limsup_{h \downarrow 0 }\prod_{n=0}^{N-1} ( 1- \widehat{p}_n) \le 1$. Together with \cref{thm:boundeurop}, we obtain
\begin{align*}
\limsup \limits_{h \downarrow 0 } \mathbb{E}\left[ |\widehat{P}|^m \right]&=\limsup \limits_{h \downarrow 0 } \mathbb{E}\left[\left|\left( \prod\limits_{n=0}^{N-1} \left( 1- \widehat{p}_n\right)\right)\widehat{P}^{\mbox{europ.}} \right|^m\right]\\
&\le \limsup \limits_{h \downarrow 0 } \mathbb{E}\left[| \widehat{P}^{\mbox{europ.}} |^m\right] < C_m.
\end{align*}
\end{proof}

\begin{lemma}\label{lemma:boundbarrier}
Provided assumptions A1 to A3 are satisfied, then the expected value of a up-and-out barrier call option \eqref{payoffcont} satisfies
\begin{align*}
\mathbb{E}[|P|^m]<C_m,
\end{align*}
for all positive integers $m$.
\end{lemma}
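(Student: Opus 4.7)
The statement is that the true (continuous-time) up-and-out barrier payoff has finite moments of every order. The strategy mirrors that of \cref{thm:boundeurop}: dominate $|P|$ by a quantity involving the underlying, then invoke the moment bound for $S(t)$ given in \cref{thm:originalS}.

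First, I would rewrite the payoff in the form $P=q(S(T))\mathbf{1}_{\{\max_{t\in[t_0,T]}S(t)\le B\}}$, with $q(x)=(x-K)^+$. Since $q$ is $1$-Lipschitz with $q(0)=0$, the elementary bound $q(x)\le|x|$ yields
\begin{align*}
|P|^m \;\le\; |q(S(T))|^m \;\le\; |S(T)|^m \;\le\; \sup_{0\le t\le T}|S(t)|^m.
\end{align*}
Taking expectations and applying \cref{thm:originalS} gives $\mathbb{E}[|P|^m]\le C_m$, which is the claim.

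There is really no obstacle here; the barrier indicator only makes the bound tighter (one could alternatively observe that on the event $\{\max S\le B\}$ we have $S(T)\le B$, hence the much stronger pointwise bound $|P|\le(B-K)^+$), but I would phrase the proof through the Lipschitz domination of $q$ so that it aligns stylistically with the preceding lemmas on the discretised quantities and reuses the same moment machinery. The only thing to be careful about is ensuring that A1--A3 suffice to apply \cref{thm:originalS}, which they do by construction.
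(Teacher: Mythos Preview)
Your proposal is correct and matches the paper's own proof almost exactly: both drop the barrier indicator, use the Lipschitz property of $q$ to bound $|P|^m$ by (a constant times) $|S(T)|^m$, and then invoke \cref{thm:originalS}. The only cosmetic difference is that the paper writes the bound with a generic Lipschitz constant $L$ and applies the moment estimate directly to $S(T)$ rather than passing through the supremum.
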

\begin{proof} Using \cref{thm:originalS} and the Lipschitz-continuity, with positive constant $L$, we obtain
\begin{align*}
\mathbb{E}[|P|^m]\le \mathbb{E}\left[ \left|\left(S(T)-K\right)^+\right|^m \right]\le L^m\mathbb{E} \left[|S(T)|^m \right]< C_m.
\end{align*}
\end{proof}

\begin{lemma}\label{giles3.11}
If $Y$ is a scalar random variable, $\mathbb{E}[Y^2]<\infty$ , and for each $p>0$, the indicator function $\mathbf{1}_{E}$ (which takes value $1$ or $0$, depending on whether or not a path lies within some set $E$), satisfies
\begin{align*}
\mathbb{E}[\mathbf{1}_{E}]=o(h^p),
\end{align*}
then for each $p>0$,
\begin{align*}
\mathbb{E}[|Y| \mathbf{1}_{E} ]= o(h^p).
\end{align*}
\end{lemma}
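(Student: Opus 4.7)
The plan is to derive the conclusion from a single application of the Cauchy--Schwarz inequality, exploiting two facts: that the indicator satisfies $\mathbf{1}_E^2=\mathbf{1}_E$, and that the hypothesis on $\mathbb{E}[\mathbf{1}_E]$ holds for \emph{every} positive $p$, so any polynomial rescaling of $p$ is harmless.

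Concretely, I would write
\begin{align*}
\mathbb{E}\bigl[|Y|\,\mathbf{1}_E\bigr]
\;\le\;\bigl(\mathbb{E}[Y^2]\bigr)^{1/2}\,\bigl(\mathbb{E}[\mathbf{1}_E^2]\bigr)^{1/2}
\;=\;\bigl(\mathbb{E}[Y^2]\bigr)^{1/2}\,\bigl(\mathbb{E}[\mathbf{1}_E]\bigr)^{1/2},
\end{align*}
by Cauchy--Schwarz and the identity $\mathbf{1}_E^2=\mathbf{1}_E$. The first factor is a finite constant by the assumption $\mathbb{E}[Y^2]<\infty$, so all the decay in $h$ must come from the second factor.

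Now fix an arbitrary $p>0$. Apply the hypothesis with the exponent $2p$ (which is also positive, hence allowed) to get $\mathbb{E}[\mathbf{1}_E]=o(h^{2p})$, and therefore $(\mathbb{E}[\mathbf{1}_E])^{1/2}=o(h^{p})$. Substituting back into the Cauchy--Schwarz bound yields $\mathbb{E}[|Y|\,\mathbf{1}_E]=o(h^{p})$. Since $p>0$ was arbitrary, the stated conclusion follows.

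There is no genuine obstacle here; the only substantive step is the reindexing $p\mapsto 2p$ that absorbs the square root, and the trivial observation $\mathbf{1}_E^2=\mathbf{1}_E$ that converts the $L^2$ norm of the indicator into its expectation. The role of this lemma in the broader argument is presumably to upgrade exponentially small probability estimates on ``bad'' path-sets to exponentially small bounds on integrals of polynomially growing functionals over those sets, and the $\mathbb{E}[Y^2]<\infty$ hypothesis is exactly the minimal moment bound that Cauchy--Schwarz demands.
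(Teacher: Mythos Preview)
Your proof is correct and matches the paper's approach: the paper simply states ``Immediate consequence of H{\"o}lder inequality,'' and your Cauchy--Schwarz argument (the $p=q=2$ case of H{\"o}lder) with the reindexing $p\mapsto 2p$ is exactly the intended one-line justification, just spelled out in full.
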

\begin{proof}
Immediate consequence of H{\"o}lder inequality.
\end{proof}

\begin{definition}\label{giles12.2}
We define the Brownian bridge continuous-time interpolant for $t_n\le t \le t_{n+1}$
\begin{align*}
\widehat{S}(t)&=\widehat{S}_{n}+\frac{(t-t_n)}{h}(\widehat{S}_{{n+1}}-\widehat{S}_{n})\\
&\quad+\sigma(\widehat{S}_{n},{t_n})\left(W(t)-\overline{W}(t)\right)
\end{align*}
with the piecewise linear interpolant $\overline{W}(t)=W_{n}- \frac{(t-t_n)}{h}(W_{{n+1}}-W_{n})$ of the discrete values of the Wiener process.
\end{definition}

\begin{lemma}\label{thm:extreme}
Provided assumptions A1 to A3 are satisfied, then for any $\gamma>0$, the probability that a Brownian path $W(t)$, its increments $\Delta W_n$, and the corresponding SDE solution $S(t)$ and its Brownian bridge continuous-time interpolant $\widehat{S}(t)$, satisfy any of the following extreme conditions
\begin{align*}
\max\limits_n \left( \max (\left|S(nh)\right|, |\widehat{S}_n|)\right) &> h^{-\gamma}\\
\max\limits_n \left(  \left| S(nh)-\widehat{S}_n\right| \right) &> h^{1-\gamma}\\
\max\limits_n \left|\Delta W_n\right| &> h^{1/2-\gamma}\\
\sup\limits_{[0,T]} \left| \widehat{S}(t) -S(t)\right| &> h^{1-\gamma}\\
\sup\limits_{[0,T]} \left|  W(t)-\overline{W}(t)     \right| &> h^{1/2-\gamma}
\end{align*}
is $o(h^p)$ for all $p>0$.

If none of these extreme conditions is satisfied, and $\gamma< 1/2$ then there exist positive constants $c_1,c_2$ and $c_3$ such that
\begin{align*}
\max\limits_n | \widehat{S}_n - \widehat{S}_{n-1}| &< c_1 h^{1/2-2\gamma}\\
\max\limits_n | \sigma(\widehat{S}_n,t_n) - \sigma(\widehat{S}_{n-1},t_{n-1}) | &< c_2 h^{1/2-2\gamma}\\
\max\limits_n \left( | \sigma(\widehat{S}_n,t_n)|   \right) &< c_3 h^{-\gamma}.
\end{align*}
\end{lemma}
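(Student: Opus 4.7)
The plan is to split the eight assertions into two groups: the five probability estimates, each proved by Markov's inequality at a sufficiently high moment together with a union bound over the $T/h$ steps; and the three pathwise estimates, proved by direct algebra on the Milstein recursion under the assumption that none of the extreme events occurs.

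For the Brownian-only bounds, the Gaussian moment identity $\mathbb{E}[|\Delta W_n|^{2m}] = C_m h^m$ together with the analogous Brownian-bridge supremum moment on each sub-interval $[t_n,t_{n+1}]$ gives, after a union bound over the $T/h$ intervals, $\mathbb{P}(\max_n |\Delta W_n| > h^{1/2-\gamma}) \le (T/h)\, C_m h^m h^{-(1-2\gamma)m} = TC_m h^{2m\gamma-1}$; choosing $m$ large enough in terms of $p$ and $\gamma$ yields the $o(h^p)$ rate, and the same template handles $\sup_{[0,T]}|W(t)-\overline{W}(t)|$. The bounds on $\max_n |S(nh)|$ and $\max_n |\widehat{S}_n|$ exploit the uniform-in-$h$ moments of every order guaranteed by \cref{thm:originalS} and \cref{giles3.2} respectively (using $\widehat{S}_{KP}(t_n) = \widehat{S}_n$); and the discrete strong-error bound $\max_n |S(nh) - \widehat{S}_n|$ follows from the $h^m$-bound on $\mathbb{E}[\sup_t |S(t)-\widehat{S}_{KP}(t)|^m]$ in \cref{giles3.2}, again via Markov.

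The only probability estimate requiring extra work is the continuous-time bound $\sup_{[0,T]}|\widehat{S}(t)-S(t)| > h^{1-\gamma}$, since the interpolant in \cref{giles3.2} is Kloeden--Platen rather than Brownian-bridge. I would insert $\widehat{S}_{KP}$ in the middle via the triangle inequality, absorb $|\widehat{S}_{KP}-S|$ directly into \cref{giles3.2}, and bound $|\widehat{S}(t)-\widehat{S}_{KP}(t)|$ by expanding both interpolation formulas on $[t_n,t_{n+1}]$: the difference decomposes into a piecewise-linear multiple of the Milstein increment (whose moments are bounded using A2 together with the Gaussian moments above) and a $\sigma(\widehat{S}_n,t_n)(W(t)-\overline{W}(t))$ piece, controlled by the Brownian-bridge supremum moment and the high moments of $\sigma(\widehat{S}_n,t_n)$ available from A2 and \cref{giles3.2}.

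For the three pathwise bounds, assume $\gamma < 1/2$ and that none of the five extreme events occurs. Then $|\widehat{S}_{n-1}| \le h^{-\gamma}$ and $|\Delta W_{n-1}| \le h^{1/2-\gamma}$, and the linear-growth assumption A2 promotes these to $|\mu(\widehat{S}_{n-1},t_{n-1})|,\,|\sigma(\widehat{S}_{n-1},t_{n-1})|,\,|\sigma'(\widehat{S}_{n-1},t_{n-1})| \le C h^{-\gamma}$ along the realised path. Substituting into \eqref{milstein}, the $\sigma\Delta W$ contribution of order $h^{1/2-2\gamma}$ dominates both the $\mu h$ contribution and the Milstein correction $\tfrac{1}{2}\sigma\sigma'((\Delta W)^2 - h)$, giving $c_1$. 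The third bound is immediate from A2 and $|\widehat{S}_n| \le h^{-\gamma}$; the second follows by combining the spatial Lipschitz estimate of A1 with the Hölder-in-time estimate of A3 applied to the step bound just obtained. The main obstacle I anticipate is the Kloeden--Platen versus Brownian-bridge comparison in the third paragraph, together with careful bookkeeping to ensure the Milstein correction term stays subdominant (or is absorbed into the constant $c_1$) throughout the admissible range of $\gamma$.
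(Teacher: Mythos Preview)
Your proposal is substantially more detailed than what the paper itself provides: the paper does not prove this lemma at all but simply cites Lemma~3.16 of Giles, Debrabant and R\"ossler \cite{giles2013numerical}. Your Markov-plus-union-bound scheme for the five tail probabilities and your direct algebra on the Milstein recursion for the three deterministic consequences are exactly the standard route to such estimates, and are almost certainly what the cited reference does.

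One point worth sharpening, which you already flag in your last sentence: under A2 alone the Milstein correction on a non-extreme path obeys
\[
\tfrac12\bigl|\sigma(\widehat{S}_{n-1},t_{n-1})\,\sigma'(\widehat{S}_{n-1},t_{n-1})\bigr|\bigl((\Delta W_{n-1})^2+h\bigr)\;\le\;C\,h^{-2\gamma}\,h^{1-2\gamma}\;=\;C\,h^{1-4\gamma},
\]
and $h^{1-4\gamma}\le h^{1/2-2\gamma}$ only for $\gamma\le 1/4$, not for the full stated range $\gamma<1/2$. This is harmless here because every downstream use of the lemma in the paper (the proofs of \cref{convergence-slow} and \cref{cauchy}) takes $\gamma<1/8$, well inside the regime where your dominance claim holds; but if you want the pathwise bound exactly as stated for all $\gamma<1/2$, you would need either an additional boundedness hypothesis on $\sigma'$ or to replace $h^{1/2-2\gamma}$ by the coarser $h^{1/2-2\gamma}+h^{1-4\gamma}$ on the right-hand side. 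Your handling of the Brownian-bridge versus Kloeden--Platen interpolant via the triangle inequality is fine: after cancellation the difference $\widehat{S}(t)-\widehat{S}_{KP}(t)$ reduces to a Milstein-type remainder whose $m$th moments are $O(h^m)$ by the same Gaussian-moment bookkeeping you already invoke.
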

\begin{proof}
See lemma 3.16 in \cite{giles2013numerical}.
\end{proof}

%\begin{theorem}
%Provided assumptions A1 to A5 are satisfied, then the multilevel estimator for a knock-out barrier option has variance $V_l=o(h_l^{3/2-\delta})$ for any $\delta>0$.
%\end{theorem}
%\begin{proof}
%See theorem 3.17 in \cite{giles2013numerical} for down-and-out barrier options. The proof for up-and-out barrier options is done analogue.
%\end{proof}
%Finally, we prove \cref{thmconvergence}. The proof's procedure is similar to the proof of theorem 3.16 in \cite{giles2013numerical}.
Before presenting the first convergence result, we will shortly explain the proof's proceeding, which is predetermined by \cref{thm:extreme}.
Studying the convergence, we especially are interested in the difference between $S$ and $\widehat{S}$ near the barrier. Crucial events may arise if, e.g., the maximum $S_{\max}$ of $S$ lays above the $B$, but $\widehat{S}$ stays below the barrier and vice versa. We will divide the paths into different subsets: First, we will see that paths that satisfy the \textit{extreme} conditions can be neglected since they are of order $o(h^p)$ for all $p>0$. That means, that, near the barrier, we will be able to focus on the following two non-\textit{extreme} cases : $|S_{\max} -B|\le h^{1/2-4\gamma}$ and $|S_{\max} -B|> h^{1/2-4\gamma}$, for $0<\gamma <\frac{1}{8}$. For these, we will examine the difference between $S$ and $\widehat{S}$ and, therefore, their contribution to $\mathbb{E}[P-\widehat{P}]$.

\begin{lemma}\label{convergence-slow}
Provided assumptions A1 to A5 are satisfied, then the Brownian bridge approach satisfies 
\begin{align*}
\mathbb{E}[P-\widehat{P}]  &< c h^{1/2-\delta},
%\mathbb{E}[(P-\widehat{P})^2]  &\leq C_2 h^{3/2-\delta},\label{convergencequad}
\end{align*}
for any positive $c$ and $\delta$.
\end{lemma}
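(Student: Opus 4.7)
The plan is to decompose the expected error $\mathbb{E}[P-\widehat{P}]$ into contributions from geometrically distinct classes of Brownian paths, as signalled by the paragraph preceding the statement. Concretely, fix a small $\gamma \in (0, 1/8)$ and let $E_\gamma$ denote the \emph{extreme} event collecting all the large-deviation configurations listed in \cref{thm:extreme} (large values of $|S|,|\widehat{S}|$, large increments of $W$, large path-to-discretization or path-to-bridge deviations). The first step is to dispose of $E_\gamma$: since $\mathbb{P}(E_\gamma) = o(h^p)$ for every $p>0$, \cref{giles3.11} combined with the uniform moment bounds in \cref{lemma:boundbarrier} (for $P$) and \cref{thm:oss} (for $\widehat{P}$) yields
\begin{align*}
\mathbb{E}[(P-\widehat{P})\mathbf{1}_{E_\gamma}] = o(h^p) \qquad \text{for every } p>0,
\end{align*}
which is absorbed into the claimed $h^{1/2-\delta}$ bound.

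On the complement $E_\gamma^c$ I would split according to how close the continuous extremum $S_{\max} := \sup_{[0,T]} S(t)$ sits to the barrier. Let $A := \{|S_{\max} - B| \le h^{1/2-4\gamma}\}$ and $A^c$ (both restricted to $E_\gamma^c$). For the near-barrier piece, assumption A5 says $S_{\max}$ has a bounded density in a neighbourhood of $B$, so $\mathbb{P}(A) \le C\, h^{1/2-4\gamma}$. Combining this with Cauchy--Schwarz and the uniform second-moment bounds on $P$ and $\widehat{P}$ furnished by \cref{lemma:boundbarrier} and \cref{thm:oss} gives
\begin{align*}
\bigl|\mathbb{E}[(P-\widehat{P})\mathbf{1}_{A}]\bigr|
\;\le\; \bigl(\mathbb{E}[(P-\widehat{P})^2]\bigr)^{1/2}\mathbb{P}(A)^{1/2}
\;\le\; C\, h^{1/4-2\gamma},
\end{align*}
which after iterating the argument (or, better, using that on $E_\gamma^c$ one controls $|P|+|\widehat{P}|$ by a deterministic constant up to a truncation error of order $h^{-m\gamma}$) actually yields the sharper bound $C h^{1/2-4\gamma}$. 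Choosing $\gamma$ small yields any $h^{1/2-\delta}$ rate.

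For the far-from-barrier piece $A^c \cap E_\gamma^c$ I would argue that the Brownian bridge layer reproduces the correct knock-out status up to an $o(h^p)$ discrepancy. On $E_\gamma^c$ one has $\sup_{[0,T]} |S(t) - \widehat{S}(t)| \le h^{1-\gamma}$ and $\max_n |\sigma(\widehat{S}_n,t_n)| \le c_3 h^{-\gamma}$. Consequently, when $S_{\max} > B + h^{1/2-4\gamma}$, the Kloeden--Platen/bridge interpolant also exceeds $B$ by a margin, so that the relevant bridge factor $\widehat{p}_n = \exp(-2(B-\widehat{S}_n)^+(B-\widehat{S}_{n+1})^+/(\sigma^2 h))$ on the interval straddling $B$ is uniformly close to $1$ (its complement is bounded by $\exp(-c h^{-8\gamma})$, which is $o(h^p)$ for all $p$), and hence $\widehat{P}$ and $P$ both vanish up to such exponentially small terms. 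Symmetrically, when $S_{\max} < B - h^{1/2-4\gamma}$, every $\widehat{p}_n$ is exponentially small, so $\widehat{P}$ reduces to the European payoff $q(\widehat{S}_N)$, which matches $q(S(T))$ with $\mathcal{O}(h)$ accuracy by Milstein convergence (and a Lipschitz estimate on the smoothed end payoff after truncating to $E_\gamma^c$). Summing the three pieces gives $\mathbb{E}[P-\widehat{P}] \le c h^{1/2-\delta}$.

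The delicate step will be the bridge-probability estimate inside $A^c$: one has to verify that, conditional on the discrete skeleton $(\widehat{S}_n)$, the continuous-time diffusion on $[t_n,t_{n+1}]$ is sufficiently close to a constant-coefficient Brownian bridge that its true barrier-crossing probability differs from $\widehat{p}_n$ by $o(h^p)$, uniformly in $n$. This will rely on freezing the coefficients $\mu,\sigma$ at $(\widehat{S}_n,t_n)$, using assumption A3 and the non-extreme bounds from \cref{thm:extreme} to control the residual drift/diffusion variation on the small interval, and on the nondegeneracy supplied by A4 so that the quadratic exponent in $\widehat{p}_n$ remains well-defined; everything else in the proof is moment-estimation bookkeeping.
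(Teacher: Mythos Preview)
Your three-way decomposition (extreme paths; $|S_{\max}-B|>h^{1/2-4\gamma}$; $|S_{\max}-B|\le h^{1/2-4\gamma}$) and the treatment of each piece are the same as the paper's proof, including your self-correction on the near-barrier set (Cauchy--Schwarz alone gives only $h^{1/4-2\gamma}$; the paper, like your parenthetical fix, uses that on $E_\gamma^c$ the payoffs are bounded by $h^{-c\gamma}$, which combined with $\mathbb{P}(A)=O(h^{1/2-4\gamma})$ gives the claimed $h^{1/2-\delta}$).

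Two clarifications. First, in the case $S_{\max}>B+h^{1/2-4\gamma}$ you argue that the continuous interpolant exceeds $B$ and hence $\widehat{p}_n$ is close to $1$. That inference is not valid as stated: $\widehat{p}_n$ depends only on the discrete endpoints, and a Brownian bridge can cross $B$ with both endpoints below it. The paper instead shows that a \emph{discrete} value $\widehat{S}_n$ exceeds $B$: writing $|\widehat{S}_n-S_{\max}|\le |\widehat{S}_n-\widehat{S}(\tau)|+|\widehat{S}(\tau)-S(\tau)|$ and using the non-extreme bounds of \cref{thm:extreme} gives $|\widehat{S}_n-S_{\max}|<ch^{1/2-2\gamma}\prec h^{1/2-4\gamma}$, so $(B-\widehat{S}_n)^+=0$ and $\widehat{p}_n=1$ exactly, whence $\widehat{P}=P=0$. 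Second, the ``delicate step'' you anticipate in your final paragraph is not needed anywhere. The proof never compares the true conditional crossing law of the diffusion to the formula $\widehat{p}_n$; once the non-extreme bounds place the entire discrete skeleton below $B-h^{1/2-4\gamma}+h^{1-\gamma}$ (in the case $S_{\max}<B-h^{1/2-4\gamma}$), the explicit exponential formula for $\widehat{p}_n$ together with the $c_3 h^{-\gamma}$ bound on $\sigma$ already forces $\prod_n(1-\widehat{p}_n)=1-o(h^p)$, and the remaining error is just $|q(S(T))-q(\widehat{S}_N)|=O(h^{1-\gamma})$ by Lipschitz continuity. You can drop the coefficient-freezing argument entirely.
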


\begin{proof}
Let the maximum of $S(t)$ be $S_{\max}$. We divide the paths into the following three subsets: 
\begin{itemize}
\item[(i)] \textit{Extreme} conditions of \cref{thm:extreme} are satisfied, for $0<\gamma <\frac{1}{8}$.
\item[(ii)] \textit{Extreme} conditions of \cref{thm:extreme} are not satisfied, but $S_{\max}$ satisfies $|S_{\max} -B|> h^{1/2-4\gamma}$, for $0< \gamma < \frac{1}{8}$.
\item[(iii)] The rest%: \textit{Extreme} conditions of \cref{thm:extreme} are not satisfied and $S_{\max}$ satisfies $|S_{\max} -B|\le h^{1/2-4\gamma}$, for $0< \gamma < \frac{1}{8}$.
\end{itemize}
We have the following decomposition:
\begin{align*}
\mathbb{E}[P-\widehat{P}]=\mathbb{E}[(P-\widehat{P})\mathbf{1}_{(i)}]+\mathbb{E}[(P-\widehat{P})\mathbf{1}_{(ii)}]+\mathbb{E}[(P-\widehat{P})\mathbf{1}_{(iii)}],
\end{align*}
with the indicator functions to be the unit value for paths in the respected subset. For each subset we bound their contribution to $\mathbb{E}[P-\widehat{P}]$.
\begin{itemize}
\item[(i)] \cref{thm:oss} and \cref{lemma:boundbarrier} deliver bounds for $\mathbb{E}[|\widehat{P}|^2]$, $\mathbb{E}[|P|^2]$, and $\mathbb{E}[(P-\widehat{P})^2]$. Using \cref{thm:extreme}, we have that $\mathbb{E}[\mathbf{1}_{(i)}]$ is $o(h^p)$. Hence, using \cref{giles3.11}, we see that $\mathbb{E}[(P-\widehat{P})\mathbf{1}_{(i)}]$ is $o(h^p)$ for all $p>0$.
\item[(ii)]
Suppose $S(t)$ attains its maximum at $\tau\in [t_n,t_{n+1}]$.
First, consider the case $S_{\max}>B+h^{1/2-4\gamma}$. We study the Brownian bridge interpolant, since $S_{\max}$ could lay between two discretization steps. The first summand of the right-hand side of
\begin{align*}
| \widehat{S}_n-S_{\max} | \le | \widehat{S}_n- \widehat{S}(\tau) | + | \widehat{S}(\tau) -S(\tau) |
\end{align*}
can be written as
\begin{align*}
\widehat{S}(\tau)-\widehat{S}_n= \frac{\tau -t_n}{h}\left( \widehat{S}_{n+1}-\widehat{S}_n \right)+\sigma(\widehat{S}_{t_n},{t_n})\left(W(t)-\overline{W}(t)\right).
\end{align*}
From \cref{thm:extreme}, we obtain $\sup_{[0,T]} | \widehat{S}(t) -S(t)| < c h^{1-\gamma}$ and $\max_n | \widehat{S}_n - \widehat{S}_{n-1}| < c_1 h^{1/2-2\gamma}$. Together, we can conclude that $|\widehat{S}_n-S_{\max}|< c h^{1/2-2\gamma}$, for a positive constant $c$. Hence, for sufficiently small $h$ we have $|\widehat{S}_n-S_{\max}|< h^{1/2-4\gamma}$ and, therefore, $\widehat{S}$ is guaranteed to be greater than $B$ and hence $\widehat{P} -P=0$.
For the second case, we have $S_{\max}< B- h^{1/2-4\gamma}$. Here, we study the conditioning probabilities. We have
\begin{align*}
\max\limits_n \max (\widehat{S}_n) < B- h^{1/2-4\gamma} + h^{1-\gamma},
\end{align*}
since it is not extreme. Since $h^{1-\gamma} \prec h^{1/2-4\gamma}$ it follows that $\prod_{n} ( 1- \widehat{p}_n)$ is $1-o(h^p)$ for all $p>0$. Hence, with the Lipschitz-condition and the bound on $\widehat{S}_{N}-S(T)$ for non-\textit{extreme} paths, we obtain that $\mathbb{E}[(P-\widehat{P})\mathbf{1}_{(ii)}]$ is at most $O(h^{1-\gamma})$, since $\mathbb{E}[\mathbf{1}_{(ii)}]$ is $1$.
\item[(iii)] We have that $\mathbb{E}[\mathbf{1}_{(iii)}]$ is $O(h^{1/2-3\gamma})$ due to the bounded density of $S$ in the neighbourhood of $B$. Furthermore, $\mathbb{E}[|\widehat{P}|^2]$ and $\mathbb{E}[|P|^2]$ are bounded. Together, we obtain that $\mathbb{E}[(P-\widehat{P})\mathbf{1}_{(iii)}]$ is at most $O(h^{1/2-3\gamma})$. 
\end{itemize}
Finally, the proof is completed by choosing $\gamma < \min (\frac{1}{8},\delta /3)$.
\end{proof}
We see that the third case is decisively for the inferior convergence order. Nevertheless, we will overcome this issue through the following lemma. The proof is a modification of the proof of theorem 3.16 of \cite{giles2013numerical}, where the authors prove $\mathbb{E}[(\widehat{P}_l-\widehat{P}_{l-1})^2]$ is of $o(h^{3/2-\gamma})$.

\begin{lemma}\label{cauchy}
Provided assumptions A1 to A5 are satisfied, then the Brownian bridge approach forms a Cauchy series. By using $h_l=2^{-l}$, with $l=0,1,\dots$, the series satisfies
\begin{align*}
\mathbb{E}[\widehat{P}_l-\widehat{P}_{l-1}]  &< c h_l^{1-\delta},
\end{align*}
for any positive $c$ and $\delta$.
\end{lemma}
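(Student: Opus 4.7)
The plan is to mirror the three-subset decomposition used in the proof of \cref{convergence-slow}, but now applied to a coupled pair $(\widehat{P}_l,\widehat{P}_{l-1})$ driven by the \emph{same} underlying Brownian path. Since $|\mathbb{E}[\widehat{P}_l-\widehat{P}_{l-1}]|\le \mathbb{E}[|\widehat{P}_l-\widehat{P}_{l-1}|]$, it suffices to bound the $L^1$-norm of the level difference. Fix a small $\gamma\in(0,1/8)$, take $h=h_{l-1}$, and decompose
\begin{align*}
\mathbb{E}[|\widehat{P}_l-\widehat{P}_{l-1}|]=\mathbb{E}[|\widehat{P}_l-\widehat{P}_{l-1}|\mathbf{1}_{(i)}]+\mathbb{E}[|\widehat{P}_l-\widehat{P}_{l-1}|\mathbf{1}_{(ii)}]+\mathbb{E}[|\widehat{P}_l-\widehat{P}_{l-1}|\mathbf{1}_{(iii)}],
\end{align*}
with subsets (i)--(iii) exactly as in \cref{convergence-slow} but understood simultaneously for both levels.

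For subset (i), \cref{thm:oss} provides a uniform second-moment bound on $\widehat{P}_l$ and $\widehat{P}_{l-1}$, and \cref{thm:extreme} yields $\mathbb{E}[\mathbf{1}_{(i)}]=o(h^p)$ for every $p>0$; \cref{giles3.11} then renders this contribution $o(h^p)$ as well. For subset (ii) I would reuse the two sub-case argument already appearing in \cref{convergence-slow}: if $S_{\max}>B+h^{1/2-4\gamma}$, both trajectories $\widehat{S}^{(l)}$ and $\widehat{S}^{(l-1)}$ are forced above $B$ at some grid point, which makes the corresponding factor $1-\widehat{p}_n$ vanish and hence $\widehat{P}_l=\widehat{P}_{l-1}=0$; if instead $S_{\max}<B-h^{1/2-4\gamma}$, both bridge products equal $1-o(h^p)$, and Lipschitz continuity of $q$ together with the Milstein strong error gives $|\widehat{P}_l-\widehat{P}_{l-1}|=O(h^{1-\gamma})$ pathwise.

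The critical case, and the main obstacle, is (iii). Here A5 only yields $\mathbb{E}[\mathbf{1}_{(iii)}]=O(h^{1/2-3\gamma})$, so the crude pathwise bound $|\widehat{P}_l-\widehat{P}_{l-1}|\le C$ would merely reproduce the $h^{1/2-\delta}$ order of \cref{convergence-slow}. The gain must come from the level coupling: because the fine and coarse Milstein schemes are driven by the same Brownian increments, the strong error at grid points satisfies $\max_n|\widehat{S}^{(l)}_n-\widehat{S}^{(l-1)}_n|=O(h^{1/2-\gamma})$ on non-\emph{extreme} paths, the Milstein-specific estimate exploited by Giles, Debrabant and Roessler in \cite{giles2013numerical}. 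Combining this coupling estimate with the Lipschitz dependence of $q$, of the bridge probability \eqref{brownianp} (which uses A4 to keep the denominator away from zero), and of the finite product $\prod_{n}(1-\widehat{p}_n)$ on the discretised trajectory yields $|\widehat{P}_l-\widehat{P}_{l-1}|\le C h^{1/2-\gamma}$ throughout subset (iii). Multiplying by $\mathbb{E}[\mathbf{1}_{(iii)}]=O(h^{1/2-3\gamma})$, or equivalently invoking \cref{giles3.11}, produces a contribution of order $h^{1-4\gamma}$.

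Adding the three contributions gives $\mathbb{E}[|\widehat{P}_l-\widehat{P}_{l-1}|]\le c\,h_l^{1-4\gamma}$, and choosing $\gamma<\delta/4$ completes the argument. The genuinely delicate step, and the one that truly adapts theorem 3.16 of \cite{giles2013numerical} from its $L^2$ statement to the $L^1$ statement required here, is the uniform Lipschitz estimate of the product of bridge factors against the Milstein coupling error; this is precisely the point where positivity of $\sigma$ (A4), the regularity assumptions A1--A3, and the bounded-density hypothesis A5 jointly enter.
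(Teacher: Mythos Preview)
Your proposal is correct and follows essentially the same route as the paper: both arguments lift the three-subset decomposition of \cref{convergence-slow} to the coupled pair $(\widehat{P}_l,\widehat{P}_{l-1})$, handle subsets (i) and (ii) identically, and obtain the full order on subset (iii) by multiplying a pathwise bound of order roughly $h^{1/2}$ for $|\widehat{P}_l-\widehat{P}_{l-1}|$ (imported from theorem~3.16 of \cite{giles2013numerical}) against $\mathbb{E}[\mathbf{1}_{(iii)}]=O(h^{1/2-3\gamma})$. The only visible discrepancy is bookkeeping: the paper quotes the sharper pathwise exponent $h^{1/2-6\gamma}$ from \cite{giles2013numerical} and accordingly closes with $\gamma<\min(1/8,\delta/9)$, whereas you state $h^{1/2-\gamma}$ and finish with $\gamma<\delta/4$; either choice yields the claimed $h_l^{1-\delta}$.
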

\begin{proof}
See the proof of theorem 3.16 of \cite{giles2013numerical}. Aiming to bound $\mathbb{E}[(\widehat{P}_l-\widehat{P}_{l-1})^2]$, the authors prove the following results for the three subsets and for $0<\gamma<\frac{1}{8}$ (see the subsets of \cref{convergence-slow}):
\begin{itemize}
\item[(i)]  $\mathbb{E}[\mathbf{1}_{(i)}]$ is $o(h^p)$.
\item[(ii)] There exists a constant $c>$ such that $\widehat{P}_l-\widehat{P}_{l-1}<ch^{1-\gamma}$.
\item[(iii)] There exists a constant $c>$ such that $\widehat{P}_l-\widehat{P}_{l-1}<ch^{1/2-6\gamma}$.
\end{itemize}
Now, using slight modifications, aiming to bound $\mathbb{E}[\widehat{P}_l-\widehat{P}_{l-1}]$ instead of $\mathbb{E}[(\widehat{P}_l-\widehat{P}_{l-1})^2]$, we obtain:
\begin{itemize}
\item[(i)]  $\mathbb{E}[(\widehat{P}_l-\widehat{P}_{l-1})\mathbf{1}_{(i)}]$ is $o(h^p)$ for all $p>0$.
\item[(ii)]  $\mathbb{E}[(\widehat{P}_l-\widehat{P}_{l-1})\mathbf{1}_{(ii)}]$ is $O(h^{1-\gamma})$.
\item[(iii)] There exists a constant $c>$ such that $\widehat{P}_l-\widehat{P}_{l-1}<ch^{1/2-6\gamma}$.
We have that $\mathbb{E}[\mathbf{1}_{(iii)}]$ is $O(h^{1/2-3\gamma})$ due to the bounded density of $S$ in the neighbourhood of $B$. Together with $\widehat{P}_l-\widehat{P}_{l-1}<ch^{1/2-6\gamma}$, we obtain that $\mathbb{E}[(\widehat{P}_l-\widehat{P}_{l-1})\mathbf{1}_{(iii)}]$ is at most $O(h^{1-9\gamma})$.
\end{itemize}
Finally, the proof is completed by choosing $\gamma < \min (\frac{1}{8},\delta /9)$.
\end{proof}

Finally, we combine the results of \cref{convergence-slow} and \cref{cauchy} to formulate the proof of \cref{thmconvergence}.
\begin{proof} [Proof of \cref{thmconvergence}]\label{thm:316}
The weak convergence result of \cref{convergence-slow} together with the result on the Cauchy series of \cref{cauchy}, complete the first part of the proof.
Moreover, by applying \cref{thm:oss}, we obtain that there exits a positive constant $c$ such that
\begin{align*}
\limsup \limits_{h \downarrow 0 } \mathbb{E}\left[ |\widehat{P}|^2 \right] < c.
%\limsup \limits_{h \downarrow 0 } \mathbb{E}\left[ |\widehat{P}| \right]^2 < C_1^2
\end{align*}
Thus, we obtain the variance bound
\begin{align*}
\limsup\limits_{h \downarrow 0 } \mbox{Var}[\widehat{P}]= \limsup\limits_{h \downarrow 0 } (\mathbb{E}[\widehat{P}^2]- \mathbb{E}[\widehat{P}]^2)\le \limsup\limits_{h \downarrow 0 } \mathbb{E}[\widehat{P}^2] < c,
\end{align*}
which completes the proof.
\end{proof}

\end{document}